\theoremstyle{plain}
\newtheorem{thm}{Theorem}[section]
\newtheorem{rem}[thm]{Remark}
\newtheorem{conj}[thm]{Conjecture}
\newtheorem{prob}[thm]{Problem}
\def\cal{\mathcal}
\def\bbb{\mathbb}
\def\op{\operatorname}
\renewcommand{\phi}{\varphi}
\newcommand{\N}{\bbb{N}}
\newcommand{\Z}{\bbb{Z}}
\newcommand{\Q}{\bbb{Q}}
\begin{document}

\title[Some experiments with Ramanujan-Nagell type equations]{Some experiments with Ramanujan-Nagell type Diophantine equations}
\author{Maciej Ulas}

\keywords{Diophantine equation, Ramanujan-Nagell equation}
\subjclass[2000]{11D41}
\thanks{The research of the author was supported by the grant of the Polish National Science Centre no. UMO-2012/07/E/ST1/00185}

\begin{abstract}
Stiller proved that the Diophantine equation $x^2+119=15\cdot 2^{n}$ has exactly six solutions in positive integers. Motivated by this result we are interested in constructions of Diophantine equations of Ramanujan-Nagell type $x^2=Ak^{n}+B$ with many solutions. Here, $A,B\in\Z$ (thus $A, B$ are not necessarily positive) and $k\in\Z_{\geq 2}$ are given integers. In particular, we prove that for each $k$ there exists an infinite set $\cal{S}$ containing pairs of integers $(A, B)$ such that for each $(A,B)\in \cal{S}$ we have $\gcd(A,B)$ is square-free and the Diophantine equation $x^2=Ak^n+B$ has at least four solutions in positive integers. Moreover, we construct several Diophantine equations of the form $x^2=Ak^n+B$ with $k>2$, each containing five solutions in non-negative integers.
We also find new examples of equations $x^2=A2^{n}+B$ having six solutions in positive integers, e.g. the following Diophantine equations has exactly six solutions:
\begin{equation*}
\begin{array}{ll}
  x^2= 57\cdot 2^{n}+117440512 & n=0 , 14 ,  16, 20, 24, 25, \\
  x^2= 165\cdot 2^{n}+26404 & n=0 , 5 ,  7, 8, 10, 12.\\
\end{array}
\end{equation*}
Moreover, based on an extensive numerical calculations we state several conjectures on the number of solutions of certain parametric families of the Diophantine equations of Ramanujan-Nagell type.
\end{abstract}

\maketitle

\section{Introduction}\label{sec1}
Let us consider the Diophantine equation $x^2=2^{n}-7$ and ask about its solutions in integers $(x,n)$. The question concerning characterization of all integral solutions of this equation was posed by Ramanujan in 1913. Ljunggren posed the same question in 1943. Finally, in 1948 Nagell found all solutions of this equation and since then this equation is known as Ramanujan-Nagell equation. It is nowdays well known that it has exactly five solutions $(x, n)=(1, 3), (3, 4), (5, 5), (11, 7)$ and  $(181, 15)$. Note that the proof of this fact appeared in English in 1960, see \cite{Na}. This result was also independently obtained by several writers (Lewis, Chowla, Browkin-Schinzel). This discovery was a good motivation for  mathematicians to work on more general equations of Ramanujan-Nagell type, i.e. the equations of the form
\begin{equation}\label{geneq}
x^2=Ak^{n}+B, \quad k\in\Z_{\geq 2}, A, B\in\Z\setminus\{0\}.
\end{equation}
Here we assume that $A, B$ are not both negative. From the Siegel result on prime divisors of polynomial values we know that this equation has only finitely many solutions in integers. Many papers are devoted to the study of integral solutions of the equation (\ref{geneq}) in the case when $A=1$ and $k$ is a prime number \cite{Ape, Bo, Bo1, Tza}.
In particular, it is known that if $A=1, B<0$ and $k$ is an odd prime number not dividing $B$, then the Diophantine equation (\ref{geneq}) has at most one solution in positive integers $x$ and $n$, unless $(p, B)=(3,2)$ or $(p, B)=(4a^2+1, 3a^2+1)$
for some $a\in \N$. In these cases, there are precisely two such solutions \cite{Ape, BuS}. In the case $A=1, B>0$ with an odd prime number $k$ satisfying $p\nmid B$, the equation (\ref{geneq}) has at most four solutions in positive integers $x, n$ \cite{Bo1}.

More recent results on the equation (\ref{geneq}) can be found in \cite{Ben}, where the authors are interested with solutions satisfying $n\geq 2$ (however, later in that paper they assume $n\geq 1$). One can also look into an interesting paper \cite{Sar} where the authors look for solutions of (\ref{geneq}) with $n\geq 2$. Typical result proved in the cited papers state that there are at most one, two or three solutions in positive integers $x, n$. More precisely, we do not know much about Ramanujan-Nagell equations which have many solutions and this motivated us to state the general problem of constructing Ramanujan-Nagell type equations with many solutions in non-negative integers.

\begin{prob}\label{mainprob}
Find examples of Ramanujan-Nagell type Diophantine equations with many solutions in non-negative integers.
\end{prob}

We are thus interested in the solutions of (\ref{geneq}) which are non-negative integers. We are aware that the statement of the problem above is not very precise. Here, the meaning of ``many" depends on the type of equation under consideration. For example, Stiller proved that the Diophantine equation $x^2=15\cdot 2^{n}-119$ has exactly six solutions in non-negative integers and noted that the Diophantine equation $x^2=35\cdot 2^{n}-391$ has exactly five solutions. So, in the case of $k=2$, by many we will understand five or six. Next, if $k>3$ then according to our best knowledge there is no example of a Ramanujan-Nagell equation which has five or more solutions. Thus, in this case by many we will understand five (or more). Similarly, if $A=\pm 1$ then by many we will understand three (or more).

Let us describe the content of the paper in some detail. In section \ref{sec2} we prove that for each $k\in\Z_{\geq 2}$ there are infinitely many pairs of integers $A,B$ such that $\gcd(A,B)$ is a square-free integer and the Diophantine equation $x^2=Ak^{n}+B$ has at least four solutions in non-negative integers. Next, we will describe our general search procedure which was used in our computer experiments. In particular we present two infinite families of the equations of the form $x^2=A2^{n}+B$ which have at least five solutions in integers. We also present new examples of this form with exactly six solutions. We also present some results concerning the equation (\ref{geneq}) with $k\geq 3$ having at least five solutions. In section \ref{sec3} we consider the equation $x^2=k^n+B$. For each even $k$ we construct an infinite family of $B$'s such that the equation $x^2=k^{n}+B$ has at least three solutions. We also raise the question concerning the construction of equations of the considered type which satisfy additionally the condition $\op{gcd}(k,B)=1$ with even $k$. This question is motivated by the observation made by Beukers in \cite{Bo}. In particular we found three equations of the form $x^2=k^{n}+B$ satisfying the condition $\op{gcd}(k,B)=1$ and having three solutions. Finally, in the last section we present some remarks concerning the equation (\ref{geneq}) with $A<0$.

\section{The method of search and the equation $x^2=Ak^n+B$ with $A>0$}\label{sec2}

We start with the following simple results which shows that for any $k\in \Z_{\geq 2}$ there are infinitely many (essentially) different values of $A, B$ such that the Diophantine equation $x^2=Ak^{n}+B$ has at least four solutions. This simple result is an immediate corollary from our earlier results presented in \cite{DoUl}.

\begin{thm}\label{fourvalues}
For each $k\in \Z\setminus\{-1,0,1\}$ there are infinitely many pairs of integers $A,B$ such that $\gcd(A,B)$ is square-free  and the Diophantine equation $x^2=Ak^{n}+B$ has at least four solutions in non-negative integers.
\end{thm}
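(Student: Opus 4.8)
The plan is to produce, for each fixed $k$, a two-parameter polynomial identity that forces four values of $n$ to give squares, and then to specialize so as to obtain infinitely many genuinely different pairs $(A,B)$ with the gcd condition. The natural choice of the four exponents is $n=0,1,2,3$, since then the right-hand sides $B$, $Ak+B$, $Ak^2+B$, $Ak^3+B$ are in arithmetic-progression-like relation controlled by $A$ and $B$ alone. Concretely, I would look for $A=A(t)$, $B=B(t)\in\Z[t]$ and polynomials $x_0(t),x_1(t),x_2(t),x_3(t)$ such that
\begin{equation*}
x_i(t)^2 = A(t)k^{i}+B(t),\qquad i=0,1,2,3.
\end{equation*}
Subtracting consecutive equations gives $x_1^2-x_0^2 = A(k-1)$, $x_2^2-x_1^2 = Ak(k-1)$, $x_3^2-x_2^2 = Ak^2(k-1)$, so the three differences are in geometric progression with ratio $k$; this is exactly the kind of system whose solutions were parametrized in \cite{DoUl}, and I would simply invoke that parametrization rather than rediscover it. That reference (on arithmetic/geometric progressions of squares, or on the curve packaging four squares of the prescribed shape) should supply an explicit rational or polynomial family $(A(t),B(t))$, which is where the real content is imported.

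Granting the family from \cite{DoUl}, the remaining work is bookkeeping. First I would clear denominators: if the parametrization is rational, multiply $A,B,x_i$ through by a suitable power of the common denominator $d(t)$, i.e. replace $(A,B,x_i)$ by $(d(t)^2A, d(t)^2B, d(t)x_i)$, which preserves the equations $x_i^2=Ak^i+B$ up to the harmless common factor. Next I would handle the square-free gcd requirement: write $g(t)=\gcd(A(t),B(t))$ as computed in $\Z[t]$ up to content, and divide $A$ and $B$ by the largest square factor of $g$; since dividing $A$ and $B$ by a common square $c^2$ and dividing each $x_i$ by $c$ again preserves the system, I can always pass to a representative with $\gcd(A,B)$ square-free. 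Finally, to see that infinitely many of the resulting pairs are essentially different, I would note that $A(t)$ (or $B(t)$, or the ratio) is a non-constant function of $t$, so it takes infinitely many values as $t$ ranges over $\Z$; restricting to an arithmetic progression of $t$ on which $A(t)$ is strictly monotone, and discarding the finitely many $t$ for which the specialized $x_i$ collide or some denominator vanishes, yields the claimed infinite set $\cal{S}$.

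The only genuine subtlety — and the step I would flag as the main obstacle if \cite{DoUl} did not already resolve it — is guaranteeing four \emph{distinct} non-negative solutions after specialization: a priori the polynomial identity might degenerate so that, say, $x_1(t)=x_2(t)$ or $Ak^i+B<0$ for the relevant $i$, or the four exponents $0,1,2,3$ might not all remain admissible. I would deal with this by checking that the $x_i(t)$ are pairwise distinct as polynomials (hence distinct for all but finitely many integer $t$) and that $\deg A = \deg B$ with matching leading behaviour so that $A(t)k^i+B(t)>0$ for $t$ large in the chosen progression; both are finite verifications. Everything else — the positivity of $x$, the fact that $n\in\{0,1,2,3\}$ are non-negative integers, and squareness — is immediate from the construction.
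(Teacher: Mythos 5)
There is a genuine gap at the very first step: you fix the four exponents to be $n=0,1,2,3$ and then hope that \cite{DoUl} hands you a one-parameter polynomial (or rational) family $(A(t),B(t),x_0(t),\dots,x_3(t))$ solving $x_i^2=Ak^i+B$. No such family is available, and for fixed exponents one should not expect one. Eliminating $A$ and $B$ from your system leaves the two quadrics $x_2^2-(k+1)x_1^2+kx_0^2=0$ and $x_3^2-(k+1)x_2^2+kx_1^2=0$ in $\bbb{P}^3$; one checks that their intersection is a smooth curve of genus one, which admits no nonconstant polynomial parametrization, and whether it has infinitely many rational points is a rank question depending on $k$ that you do not address. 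What \cite[Corollary 2.9]{DoUl} actually supplies is a single linear polynomial $F(x)=Px+Q$, with $P,Q$ explicit in the elementary symmetric functions of four prescribed nodes $a,b,c,d$, taking square values at those nodes --- that is, exactly one pair $(A,B)$ per choice of nodes, with no free parameter left once the nodes are fixed. The paper therefore gets its infinitude by varying the exponents, taking $(a,b,c,d)=(1,k^{p},k^{q},k^{r})$ and letting the triple $(p,q,r)$ range, rather than by trying to move inside the fiber over a fixed exponent set.

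The second, related gap is your distinctness argument. Observing that $A(t)$ is nonconstant, hence takes infinitely many values, does not show that the pairs obtained after dividing out the square part of $\gcd(A,B)$ are pairwise distinct: two parameters $t\neq t'$ could satisfy $A(t)=c^2A(t')$ and $B(t)=c^2B(t')$ and collapse to the same reduced equation. Ruling this out is where the real work of the theorem lies. In the paper one must show that for a fixed target pair of square-free parts $(a,b)$ the system $aT^2=A(p,q,r)$, $bT^2=B(p,q,r)$ has no integer solutions for all but finitely many admissible triples; this is done by specializing to $(p,q,r)=(mt,mt+1,mt+2)$, recognizing $aT^2=A$ as a hyperelliptic curve of genus $\lfloor 3m/2\rfloor\geq 2$ in the variable $x=k^{t}$, and invoking Faltings' theorem to exclude all sufficiently large $t$. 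Nothing in your proposal substitutes for this step, and without it neither the infinitude nor the square-free gcd normalization yields genuinely different equations.
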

\begin{proof}
In order to prove this result we use the polynomial $F(x)=Px+Q$, where
\begin{equation*}
P=8 \left(\sigma _1^3-4 \sigma _2 \sigma _1+8 \sigma _3\right),\quad\quad Q=\sigma _1^4-8 \sigma _2 \sigma _1^2+16 \sigma _2^2-64 \sigma _4,
\end{equation*}
and $\sigma_{i}=\sigma_{i}(a,b,c,d)$ is the $i$-th elementary symmetric polynomial for $i=1,2,3,4$. This polynomial was constructed in \cite[Corollary 2.9]{DoUl}, and satisfies the equalities $F(a)=\square,  F(b)=\square, F(c)=\square, F(d)=\square$. Here $a,b,c,d$ are pairwise distinct integer parameters  and $\square$ denotes a square of an integer. By taking now $(a,b,c,d)=(1,k^{p},k^{q},k^{r})$ we see that the Diophantine equation $x^2=Ak^{n}+B$ has solutions with $n=0,p,q,r$. Here, we denote by $A=A(p,q,r)$ and $B=B(p,q,r)$ the values of $P(1,k^{p},k^{q},k^{r})$ and $Q(1,k^{p},k^{q},k^{r})$, respectively. In order to finish the proof we need to show that for infinitely many triples of positive integers $p,q,r$ with $p<q<r$ we get infinitely many different pairs $(\op{sf}(A(p,q,r)),\op{sf}(B(p,q,r)))$, where $\op{sf}(N)$ is square-free part of an integer $N$. In other words, it is enough to show that for any fixed triple of integers $(p',q',r')$ there are infinitely many triples $(p,q,r)$ satisfying the condition $p<q<r$, such that the system of Diophantine equations
\begin{equation}\label{sysfour}
aT^2=A(p,q,r),\quad bT^2=B(p,q,r),
\end{equation}
has no integer solutions in $T$. Here, in order to shorten the notation, we put $a=\op{sf}(A(p',q',r')), b=\op{sf}(B(p',q',r'))$. In order to find suitable values of $p,q,r$ we first put $p=mt, q=mt+1, r=mt+2$, where $m, t$ are positive integers which will be chosen later. Using the expression for $A$ given above, we have
\begin{equation*}
A(p,p+1,p+2)=8((k^2-k-1)x^{m}+1)((k^2-k+1)x^{m}-1)((k^2+k-1)x^{m}-1),
\end{equation*}
where we put $x=k^{t}$. Let us denote the right hand side of the above equality by $H_{m}(x)$. We observe now that the (hyperelliptic) curve $C:\;aT^2=H_{m}(x)$ (treated as a curve in the $(x,T)$ plane) is of genus $g(C)=\lfloor\frac{3m}{2}\rfloor$. In particular, $g(C)\geq 2$ for $m\geq 2$. Using now the Faltings theorem \cite{Fal} we get that the curve $C$ has only finitely many rational solutions, say $(x_{1},T_{1}),\ldots (x_{n},T_{n})$. However, there are only finitely many values of $j\in\{1,\ldots,n\}$ such that $x_{j}=k^{t}$. If $t'$ is the biggest solution of the equation $x_{j}=k^{t}$ then for each $t>t'$ we immediately deduce that there are no integers satisfying the equality $H_{m}(k^{t})=aT^{2}$. Summing up we proved that for any given triple $(p',q',r')$ there are infinitely many values of $(p,q,r)$ of the form $(mt, mt+1, mt+2)$ with $m\geq 2$ and sufficiently large $t$ (dependent on the finite number of rational points on the curve $C$) such that the system (\ref{sysfour}) has no solutions in integers.
\end{proof}

\begin{rem}
{\rm We expect that much more is true. To be more precise: we think that for each triple $(p',q',r')$ of nonnegative integers with $p'<q'<r'$ there are only finitely many triples $(p,q,r)$ such that the system of Diophantine equations given by (\ref{sysfour}) has a non-trivial integer solution in $T$.
}
\end{rem}
We give now a short description of the method which allows us to find examples of the equations $x^2=Ak^n+B$ with various constraints on $k, A, B$ and many solutions. First, we consider the case of $A, B$ with $A$ positive. Given $A$, the idea is to choose an integer $B$ in such a way that the equation $x^2=Ak^n+B$ has two fixed solutions. In particular, because we also include the case when $k\mid A$ we can assume that the first solution is given by $n=0$. This implies that $B=x_{1}^2-A$ for certain $A, x_{1}\in\Z$. Next, if we want to have second solution with fixed $n=p$, where $p$ is a given positive integer, then necessarily $x_{2}^2=Ak^{p}+B$ for certain $x_{2}\in \Z$ and $x_{1}<x_{2}$. This implies that the numbers $x_{1}, x_{2}, A$ satisfy the equation $x_{2}^2-x_{1}^2=A(k^{p}-1)$. So, for any given $A$ we consider the set
\begin{equation*}
\cal{D}:=\{d\in\N:\;d\mid A(k^{p}-1)\},
\end{equation*}
i.e. the set of divisors of the number $K:=A(k^{p}-1)$. We thus see that for each $d\in \cal{D}$ with $d\leq K/2$ we have a solution of the equation  $x_{2}^2-x_{1}^2=K=d\cdot\frac{K}{d}$ given by
\begin{equation*}
x_{1}=\frac{1}{2}\left(\frac{K}{d}-d\right),\quad x_{2}=\frac{1}{2}\left(\frac{K}{d}+d\right)
\end{equation*}
and the corresponding $B$ we are looking for takes the form
\begin{equation*}
B=\left(\frac{1}{2}\left(\frac{K}{d}-d\right)\right)^{2}-Ak^{p}.
\end{equation*}
We thus see that for any given $d\in \cal{D}$ such that the value of $x_{1}$ is an integer we have values of $A, B$ which gives an equation $x^2=Ak^{n}+B$ with at least two solutions in positive integers (i.e. those with $n=0, p$). Moreover, in order to reduce the number of possible solutions without loss of generality we can assume that the number $\op{gcd}(A,B)$ is square-free. Performing then a brute force search of additional solutions of this equation we can find those values of $A, B$ which lead to equations with many solutions. Let us note that modifications of this method were used in investigations devoted to the study of Brocard-Ramanujan type equations \cite{Ul} and its various generalizations \cite{DoUl}.

We performed independent searches for the case of $k=2$ and $k>2$.  In the case of $k=2$ we found some infinite families of equations which have at least five solutions.

\begin{thm}
For each positive integer $m$ the Diophantine equations
\begin{align*}
  &x^2=(2^{3m}+1)2^{n}+1-2^{3m+3}, \\
  &x^2=\frac{1}{9}(2^{6m}-1)2^{n}+\frac{1}{9}(2^{6m+3}+1)
\end{align*}
have at least five solutions in positive integers. The first equation has solutions given by
\begin{align*}
(x,n)=&(3,3), (2^{2m+1}-2^{m+1}-1,m+2), (2^{3m+1}-1, 3m+2), \\
      &(2^{3m+2}+1,3m+4), (2^{6m+3}+2^{3m+2}-1, 9m+6).
\end{align*}
The second equation has solutions given by
\begin{align*}
(x,n)=&(2^{3m},0), \Big(\frac{1}{3}(2^{4m+1}+2^{2m+1}-1), 2m+2\Big), \Big(\frac{1}{3}(2^{6m+1}+1), 6m+2\Big),\\
      &\Big(\frac{1}{3}(2^{6m+2}-1), 6m+4\Big), \Big(\frac{1}{3}(2^{3(4m+1)}-2^{2(3m+1)}-1), 18m+6\Big).
\end{align*}                   \\
\end{thm}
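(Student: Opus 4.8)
Since the statement is a list of explicit equations together with explicit solution sets, the plan is to proceed by direct verification: there is no structural input, and essentially the whole content is in writing down the parametrized points and checking them. I would organize the argument in three stages.

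First I would record that both equations are genuinely defined over $\Z$. For the first this is automatic, since $2^{3m}+1$ and $1-2^{3m+3}$ are integers. For the second one I would check the divisibilities $9\mid 2^{6m}-1$ and $9\mid 2^{6m+3}+1$; both follow from $2^{6}\equiv 1\pmod 9$, which gives $2^{6m}\equiv 1$ and $2^{6m+3}\equiv 8\pmod 9$, so that the coefficients $\frac{1}{9}(2^{6m}-1)$ and $\frac{1}{9}(2^{6m+3}+1)$ are integers. In the same spirit I would verify that each proposed $x$-coordinate in the second family lies in $\Z$: reducing modulo $3$ and using $2\equiv-1\pmod 3$ shows that $2^{4m+1}+2^{2m+1}-1$, $2^{6m+1}+1$, $2^{6m+2}-1$ and $2^{12m+3}-2^{6m+2}-1$ are all divisible by $3$.

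The core step is to check that each listed pair $(x,n)$ satisfies the relevant equation. For each pair one substitutes the value of $n$ into the right-hand side, multiplies out, and separately expands $x^2$; collecting powers of $2$ makes the two sides agree. For example, for the first equation at $(x,n)=(2^{3m+1}-1,\,3m+2)$ one has $x^2=2^{6m+2}-2^{3m+2}+1$, while $(2^{3m}+1)2^{3m+2}+1-2^{3m+3}=2^{6m+2}+2^{3m+2}-2^{3m+3}+1=2^{6m+2}-2^{3m+2}+1$. The remaining eight points are of the same nature; the only bookkeeping that needs a little care is for the two points whose $x$-coordinate is (a ninth of) a signed sum of three powers of $2$ — namely $2^{6m+3}+2^{3m+2}-1$ and $\frac{1}{3}(2^{12m+3}-2^{6m+2}-1)$ — where the cross terms in squaring a trinomial must be tracked correctly.

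Finally I would check that the five pairs in each family are genuinely five distinct solutions. Positivity of the $x$-coordinates is clear for $m\geq 1$. For the exponents: in the second family $0,\,2m+2,\,6m+2,\,6m+4,\,18m+6$ are pairwise distinct for every $m\geq 1$, which settles that case. In the first family the exponents are $3,\,m+2,\,3m+2,\,3m+4,\,9m+6$, which are pairwise distinct as soon as $m\geq 2$. The one point that needs extra thought — and, I expect, the only genuine subtlety in the whole argument — is $m=1$, where $m+2=3$ collapses the first two listed points. But for $m=1$ the equation becomes $x^2=9(2^n-7)$, equivalently $(x/3)^2=2^n-7$, which by the Ramanujan--Nagell theorem has five solutions ($n=3,4,5,7,15$); hence the equation still has at least five solutions, the parametrized list (which in this case produces only four distinct points) being completed by the extra solution at $n=4$. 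No input beyond this single appeal to the classical result is needed.
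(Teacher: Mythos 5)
Your proposal is correct and follows essentially the same route as the paper, which simply leaves the substitution checks to the reader; your verification is in fact more complete. In particular, the $m=1$ degeneracy you identified (where $m+2=3$ collapses the first two listed points of the first family) is exactly the subtlety the paper flags in its one-line proof, and your fix via the Ramanujan--Nagell equation $x^2+7=2^n$ (obtained after dividing by $9$) is the same one the paper indicates.
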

\begin{proof}
We left the simple check that the displayed solutions satisfy corresponding equations to the reader. However, let us note that the Ramanujan-Nagell equation $x^2+7=2^n$ is contained in the first family (after division of both sides by 9) for $m=1$ with additional solution for $n=4$ (however, $3=m+2$ in this case).
\end{proof}

We also found new equations with six solutions extending the result of Stiller from \cite{Sti}. Our computations based on the method described above, were performed for $0<p\leq 30$ and $A\leq 10^{6}$. In this range we were able to find only two pairs $(A, B)$ such that the Diophantine equation $x^2=A2^{n}+B$ has six solutions. In both cases $B$ is even and positive. More precisely, we have the following result:

\begin{thm}\label{sixsolutions}
The Diophantine equation
$$
x^2= 57\cdot 2^{n}+117440512
$$
has exactly six solutions in non-negative integers, given by
$$
(x,n)=(10837, 0), (10880,14), (11008,16), (13312,20), (32768,24), (45056,25).
$$
Similarly, the Diophantine equation
$$
  x^2= 165\cdot 2^{n}+26404
$$
has exactly six solutions in non-negative integers, given by
$$
(x,n)=(163,0), (178,5), (218,7),  (262,8), (442,10), (838,12).
$$
\end{thm}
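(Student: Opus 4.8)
The verification that the six listed pairs $(x,n)$ satisfy the respective equation is a routine substitution; the whole difficulty is to show that there are \emph{no other} solutions in non-negative integers. I would treat the two equations separately, exploiting the $2$-adic structure of the constant term in each.

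For the first equation the key observation is that $117440512=7\cdot 2^{24}$, so it reads $x^{2}=57\cdot 2^{n}+7\cdot 2^{24}$ with $\op{gcd}(57,7)=1$, and I would argue by cases on $n$. If $n\ge 25$, the $2$-adic valuation of the right-hand side is $24$, hence $x=2^{12}w$ with $w$ odd and $w^{2}=57\cdot 2^{\,n-24}+7$; for $n-24\ge 3$ this forces $w^{2}\equiv 7\pmod 8$, which is impossible for odd $w$, while $n-24\in\{1,2\}$ is settled directly ($w^{2}=121$ and $w^{2}=235$). If $n=24$ then $x^{2}=2^{30}$. If $0\le n\le 23$ then $57+7\cdot 2^{\,24-n}$ is odd, so the $2$-adic valuation of the right-hand side equals $n$; this forces $n$ even, $x=2^{n/2}w$ with $w^{2}=57+7\cdot 2^{\,24-n}$, and one inspects the twelve values $n=0,2,\dots,22$ by hand. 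This disposes of the first equation completely and elementarily, the surviving solutions being exactly $n\in\{0,14,16,20,24,25\}$.

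For the second equation $26404=2^{2}\cdot 6601$ with $6601=7\cdot 23\cdot 41$; for $n\ge 2$ one has $x=2y$ and $y^{2}=165\cdot 2^{\,n-2}+6601$ with $y$ odd (the cases $n=0,1$ being immediate), so it suffices to bound $s:=n-2$. Here a single power of $2$ no longer gives an obstruction, so I would run a quadratic-residue sieve: for an odd prime $q$ with $q\nmid 165\cdot 6601$, the residue of $165\cdot 2^{s}+6601$ modulo $q$ depends only on $s$ modulo $\op{ord}_{q}(2)$, and one discards the residue classes of $s$ for which it is a quadratic non-residue mod $q$ (for instance $q=13$ already excludes half of the classes of $s$ modulo $12$). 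Combining a few such primes, together with a prime detecting the forced divisibility by $13^{2}$ (resp. $17^{2}$) in the remaining classes, leaves only sparse residue classes for $s$; in those I would invoke an effective lower bound for the relevant linear form in two logarithms — equivalently, the theory of $2$-power values in the solution sequence of a Pell equation — to bound $s$, and finish with a direct machine search up to that bound. A fully algorithmic alternative is to split $n$ into residue classes modulo $6$, multiply each resulting equation by a suitable square to obtain a Mordell equation $u^{2}=v^{3}+C$, compute the finite list of its integral points, and read off the admissible $(x,n)$.

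The main obstacle is clearly the second equation: unlike the first it admits no purely $2$-adic obstruction, so one must either check that the congruence sieve is genuinely a complete covering of the large-$s$ range (which I would not expect), or carry out the effective-diophantine step — a Baker-type bound followed by LLL reduction and enumeration — or complete the integral-point computations on the associated Mordell curves; that is where the real work lies. The reduction of the second equation, according to the parity of $n$, to the two Pell-type equations $x^{2}-165Y^{2}=26404$ and $x^{2}-330Y^{2}=26404$ with $Y$ restricted to powers of $2$ is the convenient form in which this analysis would be organized.
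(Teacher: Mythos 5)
Your treatment of the first equation is complete, correct, and genuinely more self-contained than the paper's: the paper disposes of $n\le 26$ by computer search and then kills $n\ge 26$ with the single observation that $X^2=4\cdot 57\cdot 2^m+7$ forces $X^2\equiv 3\pmod 4$ after writing $x=2^{12}X$; your uniform $2$-adic valuation argument (parity of $v_2$ for $n\le 23$, the exact cases $n=24$ and $n-24\in\{1,2\}$, and $w^2\equiv 7\pmod 8$ for $n-24\ge 3$) replaces the machine search for small $n$ by twelve hand checks and is, if anything, the cleaner route. For the second equation your ``fully algorithmic alternative'' is precisely what the paper does: it splits $n$ modulo $3$ (not $6$ --- three curves suffice, with $a\in\{165,\,2\cdot 165,\,4\cdot 165\}$), passes to the Mordell equations $u^2=v^3+26404a^2$, computes all integral points with Magma's {\tt IntegralPoints}, and reads off which have $v=a\cdot 2^m$; the Pell-type reduction according to the parity of $n$ that you mention at the end is exactly the elementary alternative the paper sketches in Remark \ref{rem1}.

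The one substantive shortfall is that for the second equation you do not carry any of your three routes to completion, and your preferred first route (a quadratic-residue sieve on $s=n-2$ backed by a bound on linear forms in two logarithms) is the least likely to close: you yourself concede the sieve probably does not cover all large residue classes, and the Baker-plus-reduction step is asserted rather than performed. Since the whole content of the theorem for $x^2=165\cdot 2^n+26404$ is the finite list of integral points on the three Mordell curves (or the finite analysis of the two Pell sequences modulo powers of $2$), a proof must actually exhibit that finite computation; identifying the correct reduction is the right idea but is not yet the proof. If you commit to the Mordell-curve route and supply the tables of integral points, your argument becomes the paper's argument, with an improved, fully elementary first half.
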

\begin{proof}
We start with the first equation. By computer search one can easily check that the only solution with $n\leq 26$ is the one displayed in the statement of the theorem. We thus can assume that $n\geq 26$. Let us put $n=26+m$ with $m\geq 0$ and let us observe that $117440512 =2^{24}\cdot 7$. By putting now $x=2^{12}X$ we reduce our problem to the proof that the Diophantine equation $X^2=4\cdot 57\cdot 2^{m}+7$ has no solutions in integers with $m\geq 0$. However, this is very easy because the shape of the equation immediately implies that $X^2\equiv 3\pmod{4}$ which is clearly impossible.

Let us consider the second equation. We observe that in order to find solutions it is enough to find all integer solutions of the Diophantine equations $x^2=ay^3+26404$ with $a\in\cal{A}$, where $\cal{A}=\{165, 2\cdot165, 4\cdot 165\}$. Then solutions in $y$ which are powers of 2 will correspond to the solutions of our equation. Equivalently, we are interested in the integer solutions of the Diophantine equations $x^2=Y^3+26404a^2$ with $Y$ coordinate of the form $Y=a2^{m}$, where $a\in\cal{A}$. Using the Magma computational package \cite{Mag} we were able to find all integer solutions of the Diophantine equations we are interested in. More precisely, we used {\tt IntegralPoints} procedure implemented in Magma which allows to find all integral points on given elliptic curve given by a Weierstrass equation. We gather the results of these computations in the table below.
\begin{center}
\begin{equation*}
\begin{array}{|l|l|}
  \hline
  a          & \mbox{Integral solutions}\; (Y, \pm x)\; \mbox{of}\; x^2=Y^3+26404a^2  \\
  \hline
  165          & (-780, 15630), (-264, 26466), (100, 26830), (165, 26895), \\
               & (1485, 63195), (2640, 138270) \\
  2\cdot 165 &(-824, 48124), (1320, 71940), (2640, 145860), (23265, 3548985)\\
  4\cdot 165 &(-2196, 30192), (-1440, 92280), (1320, 117480), (2640, 172920), \\
               &(3025, 197945), (5896, 465256), (29880, 5166120), (44440, 9368920) \\
   \hline
\end{array}
\end{equation*}
\end{center}
\begin{center}
Table 1.
\end{center}

A quick inspection of the corresponding solution sets reveals that the only non-negative integer solutions of $x^2=165\cdot 2^{n}+26404$ satisfy $n\in\{0,5,7,8,10,12\}$.

\end{proof}

\begin{rem}\label{rem1}
{\rm It is possible to give a completely elementary proof of the fact that the only solutions of the equation $x^2=165\cdot 2^{n}+26404$ correspond with $n=0,5,7,8,10,12$ using the approach presented in Stiller paper \cite{Sti} (see also \cite{Has, Tza}). In this case we need to consider solutions of two Pell type equations: $x^2=165y^2+26404$ and $x^2=2\cdot 165y^2+26404$. The set of integer solutions of the first equation is parameterized by the sequence $(x_{n}^{(1)},y_{n}^{(1)})$ satisfying the recurrence relations
\begin{equation*}
x_{n+1}^{(1)} = 1079x_{n}^{(1)} + 13860y_{n}^{(1)}, \quad y_{n+1}^{(1)} = 84x_{n}^{(1)} + 1079y_{n}^{(1)},
\end{equation*}
where the initial values $(x_{0}^{(1)},y_{0}^{(1)})$ belong to the set $V_{1}$, where
\begin{align*}
V_{1}=\{&(\pm 163,1), (\pm 167,3), (\pm 233,13), (\pm 262,16), (\pm 383, 27), (\pm 442,32), (\pm 838,64),\\
        &(\pm 977,75), (\pm 1142, 82), (\pm 1333, 103), (\pm 2587, 201), (\pm 3023, 235)\}.
\end{align*}

The set of integer solutions of the second equation is parameterized by the sequence $(x_{n}^{(2)},y_{n}^{(2)})$ satisfying the recurrence relations
\begin{equation*}
x_{n+1}^{(2)} = 109x_{n}^{(2)} + 1980y_{n}^{(2)}, \quad y_{n+1}^{(2)} = 6x_{n}^{(2)} + 109y_{n}^{(2)},
\end{equation*}
where the initial values $(x_{0}^{(2)},y_{0}^{(2)})$ belong to the set $V_{2}$, where
\begin{equation*}
V_{2}=\{(\pm 178, 4), (\pm 218, 8), (\pm 398, 20), (\pm 1102, 60)\}.
\end{equation*}
Performing now the careful (and very tedious) analysis of the sequence(s) $y_{n}^{(i)}\pmod{2^{k}}$ for $k=1,2,\ldots,7$ and the sequence $y_{n}^{(i)}\pmod{m_{i}}$ with suitable chosen $m_{i}$ (depending on the initial values from the set $V_{i}$), one can find that all non-negative integer solutions of our initial equation $x^2=2\cdot 165\cdot2^{n}+26404$ corresponding to $n=0,5,7,8,10,12$.
}
\end{rem}

\begin{rem}{\rm From the work of Beukers \cite{Bo,Bo1} it is known that the only equation of the form $x^2=2^n+B, B\equiv 1\pmod{2}$, which has at least five solutions satisfying $n\geq 1$, corresponds to $B=-7$. However, if we drop the condition on $B$ and allow non-negative solutions, we have the equation $x^2=2^n+1088$ with five solutions given by
$$
(x,n)=(33,0), (40,9), (56,11), (72,12), (184,15).
$$
Here we have $1088=2^{6}\cdot 17$. However, we were unable to find more equations with this property.
}
\end{rem}

Remarkably, in case of $k>2$ which is not a power of two we have found only a few equations of the form $x^2=Ak^{n}+B$ (satisfying the condition $\op{gcd}(A,B)$ is a square-free number), which have at least five solutions. We gather the result of our computation in the following theorem.

\begin{thm}
Each of the following Diophantine equations has exactly five solutions in non-negative integers $(x,n)$:
\begin{center}
\begin{equation*}
\begin{array}{l|lll}
\mbox{No.}&         \mbox{The equation}        & \mbox{The solution set for}\;n\\
\hline
 1 &  x^2=28\cdot 3^{n}+2997,        & n=0, 2, 5, 6, 10,                  \\
 2 &  x^2=70\cdot 3^{n}+414,         & n=0, 3, 4, 5, 8,                    \\
 3 &  x^2=130\cdot 3^{n}+5550606,    & n=0, 6, 11, 15, 16,                  \\
 4 &  x^2=148\cdot 3^{n}+41877,      & n=0, 5, 6, 9, 17,                   \\
 5 &  x^2=8740\cdot 3^{n}+57402189,  & n=0 , 4 ,  9, 15, 29 ,             \\
 6 &  x^2=6\cdot 5^{n}+11875,        & n=0, 4, 5, 6, 9,                      \\
 7 &  x^2=14\cdot 5^{n}+6875,        & n=0, 2, 4, 5, 6,                      \\
 8 &  x^2=248\cdot 6^{n}+23161,      & n=0, 1, 3, 4, 5, \\
 9 &  x^2=1513\cdot 6^n+19379701008, & n=0, 7, 9, 10, 12. \\
  \end{array}
  \end{equation*}
\end{center}
\end{thm}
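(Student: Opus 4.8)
The plan is to handle all nine equations uniformly by the method already used for $x^{2}=165\cdot 2^{n}+26404$ in the proof of Theorem~\ref{sixsolutions}, reducing each to a finite list of Mordell equations over $\Z$ and invoking Magma. Fix one of the listed equations $x^{2}=Ak^{n}+B$, so $k\in\{3,5,6\}$ and $A>0$, and let $(x,n)$ be a non-negative integer solution. Writing $n=3m+j$ with $j\in\{0,1,2\}$ and $m\geq 0$ (forced, since $n\geq 0$) we get $x^{2}=(Ak^{j})(k^{m})^{3}+B$; multiplying by $(Ak^{j})^{2}$ and setting $u=Ak^{j+m}$, $v=Ak^{j}x$ turns this into
\begin{equation*}
v^{2}=u^{3}+B(Ak^{j})^{2}=:u^{3}+D_{j}.
\end{equation*}
A direct check shows that this correspondence is a bijection between the non-negative solutions of $x^{2}=Ak^{n}+B$ and the integral points $(u,v)$ on one of the three Mordell curves $E_{j}\colon v^{2}=u^{3}+D_{j}$ ($j=0,1,2$) that satisfy $v\geq 0$, $u>0$, $u=Ak^{j+m}$ for some integer $m\geq 0$, and $Ak^{j}\mid v$. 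Hence counting the non-negative solutions amounts to enumerating these ``admissible'' integral points on $E_{0},E_{1},E_{2}$.

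The core of the argument is then to compute, for each of the $27$ Mordell equations $v^{2}=u^{3}+D_{j}$ coming from the nine cases, the full set of integral points. This is done with the \texttt{IntegralPoints} routine of Magma~\cite{Mag}, exactly as in the proof of Theorem~\ref{sixsolutions}; one records the output in tables analogous to Table~1, discards every point with $u\leq 0$, with $v<0$, with $Ak^{j}\nmid v$, or with $u/(Ak^{j})$ not a power of $k$, and reads off the surviving solutions $(x,n)=(v/(Ak^{j}),\,3m+j)$. A short brute-force search over small $n$ (already subsumed by the Mordell computation) confirms that in every one of the nine cases the admissible points are precisely the five pairs listed in the theorem, which is exactly the assertion.

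The main obstacle is the feasibility and rigour of these $27$ \texttt{IntegralPoints} calls. The constants $D_{j}=B(Ak^{j})^{2}$ are very large (exceeding $10^{15}$ for several of the equations, and of order $10^{20}$ for equation $9$), and before listing integral points Magma must first pin down the Mordell--Weil group of each $E_{j}$: carry out a successful descent to obtain the rank, produce explicit generators, and saturate. For the curves of rank at least $2$, or those whose generators have large canonical height, this is the step that may stall or require an auxiliary (possibly conditional) height bound. If that happens one can fall back on the elementary Stiller-type argument of Remark~\ref{rem1}: splitting $n$ by parity reduces the problem to the Pell equations $x^{2}-Ay^{2}=B$ and $x^{2}-kAy^{2}=B$, whose integer solutions form finitely many binary recurrence sequences in $y$, and one then shows that only the tabulated terms of these sequences are powers of $k$ by studying them modulo suitable prime powers of $k$ together with one further well-chosen modulus.
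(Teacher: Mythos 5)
Your reduction is sound, and for equations 1, 2, 4, 6 and 7 it coincides with what the paper does: write $n=3m+j$, pass to the Mordell curves $v^{2}=u^{3}+B(Ak^{j})^{2}$, call \texttt{IntegralPoints}, and sieve for points whose $u$-coordinate is $Ak^{j}$ times a power of $k$ (your admissibility conditions are correct, since $(Ak^{j})^{2}\mid v^{2}$ automatically forces $Ak^{j}\mid v$). Where you diverge is equations 3, 5, 8 and 9: the paper does \emph{not} run these through Mordell curves, but instead splits $n=4m+i$ with $i\in\{0,1,2,3\}$ and solves the quartic equations $x^{2}=Ak^{i}Y^{4}+B$ with \texttt{IntegralQuarticPoints}. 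That choice is exactly the workaround for the feasibility problem you flag at the end: for those four equations your coefficients $D_{j}=B(Ak^{j})^{2}$ reach roughly $10^{12}$ to $10^{20}$, and an unconditional determination of the Mordell--Weil group (which \texttt{IntegralPoints} needs before it can certify completeness) may stall or fail at that size, whereas the quartic models keep the constants small enough to be handled in practice. So your plan as written is not guaranteed to terminate on all $27$ curves; to make it robust you should either switch to the quartic decomposition for the large cases, as the paper does, or actually carry out the elementary Pell/recurrence fallback you sketch --- an option the paper also acknowledges (``it is possible to prove our result using only elementary techniques'') but does not execute.
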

\begin{proof}
Although it is possible to prove our result using only elementary techniques we use the same computational approach as presented in the second part of the proof of Theorem \ref{sixsolutions}. The equations numbered 3, 5, 8, 9 were solved with the help of {\tt IntegralQuarticPoints} procedure in Magma.

(1) We consider the problem of finding all integer points on the curves $x^2=Y^3+2997a^2$ with $a=28\cdot 3^{i}$ for $i=0,1,2$.
\begin{center}
\begin{equation*}
\begin{array}{|l|l|}
  \hline
  a          & \mbox{Integral solutions}\;(Y, \pm x)\;\mbox{of}\; x^2=Y^3+2997a^2  \\
  \hline
  28          &(-108, 1044), (-63, 1449), (28, 1540), (36, 1548), (252, 4284), \\
                &(396, 8028), (441, 9387), (148932, 57475404) \\
  3\cdot 28   & (-252, 2268), (2268, 108108)\\
  9\cdot 28   & (-567, 2835), (252, 14364), (756, 24948), (5076, -361908)\\
   \hline
\end{array}
\end{equation*}
\end{center}
\begin{center}
Table 2.
\end{center}

(2) We consider the problem of finding all integer points on the curves $x^2=Y^3+414a^2$ with $a=70\cdot 3^{i}$ for $i=0,1,2$.
\begin{center}
\begin{equation*}
\begin{array}{|l|l|}
  \hline
  a          & \mbox{Integral solutions}\;(Y, \pm x)\;\mbox{of}\;x^2=Y^3+414a^2  \\
  \hline
  70          &(-126, 168), (-111, 813), (-90, 1140), (70, 1540), \\
                &(105, 1785), (210, 3360), (945, 29085), (2086, 95284), (8850, 832560)  \\
  3\cdot 70   & (-126, 4032), (225, 5445), (630, 16380)\\
  9\cdot 70   & (721, 23219), (1890, 83160), (5670, 427140), (10665, 1101465)\\
   \hline
\end{array}
\end{equation*}
\end{center}
\begin{center}
Table 3.
\end{center}

(3) We consider the problem of finding all integer points on the curves $x^2=aY^4+5550606$ with $a=130\cdot 3^{i}$ for $i=0,1,2,3$.
\begin{center}
\begin{equation*}
\begin{array}{|l|l|}
  \hline
  a           & \mbox{Integral solutions}\;(\pm Y, \pm x)\;\mbox{of}\;x^2=aY^4+5550606  \\
  \hline
  130          & (1, 2356)\\
  3\cdot 130   & \mbox{no integral solutions} \\
  9\cdot 130   & (3, 2376)\\
  27\cdot 130   & (9, 5346), (27, 43254)\\
   \hline
\end{array}
\end{equation*}
\end{center}
\begin{center}
Table 4.
\end{center}

(4) We consider the problem of finding all integer points on the curves $x^2=Y^3+41877a^2$ with $a=148\cdot 3^{i}$ for $i=0,1,2$.
\begin{center}
\begin{equation*}
\begin{array}{|l|l|}
  \hline
  a          & \mbox{Integral solutions}\;(Y, \pm x)\;\mbox{of}\;x^2=Y^3+41877a^2  \\
  \hline
  148          & (148, 30340), (1332, 57276), (3996, 254412), (8361, 765117)\\
  3\cdot 148   & \mbox{no integral solutions}\\
  9\cdot 148   & (3996, 371628), (323676, 184147668) \\
   \hline
\end{array}
\end{equation*}
\end{center}
\begin{center}
Table 5.
\end{center}

(5) We consider the problem of finding all integer points on the curves $x^2=aY^4+57402189$ with $a=8740\cdot 3^{i}$ for $i=0,1,2$.
\begin{center}
\begin{equation*}
\begin{array}{|l|l|}
  \hline
  a          & \mbox{Integral solutions}\;(\pm Y, \pm x)\;\mbox{of}\;x^2=aY^4+57402189  \\
  \hline
  8740          & (1, 7577), (3, 7623)\\
  3\cdot 8740   & (9, 15147), (2187, 774486603)\\
  9\cdot 8740   & \mbox{no integral solutions} \\
  27\cdot 8740  & (27, 354213)\\
   \hline
\end{array}
\end{equation*}
\end{center}
\begin{center}
Table 6.
\end{center}

(6) We consider the problem of finding all integer points on the curves $x^2=Y^3+11875a^2$ with $a=6\cdot 5^{i}$ for $i=0,1,2$.
\begin{center}
\begin{equation*}
\begin{array}{|l|l|}
  \hline
  a          & \mbox{Integral solutions}\;(Y, \pm x)\;\mbox{of}\;x^2=Y^3+11875a^2  \\
  \hline
  6          & (-75, 75), (-51, 543), (-50, 550), (6, 654), (61, 809), \\
                   & (150, 1950), (486, 10734), (750,  20550), (1194750, 1305916950)\\
  5\cdot 6   & (-219, 429), (-50, 3250), (150, 3750), (1125, 37875)\\
  25\cdot 6  &  (750, 26250)\\
   \hline
\end{array}
\end{equation*}
\end{center}
\begin{center}
Table 7.
\end{center}

(7) We consider the problem of finding all integer points on the curves $x^2=Y^3+6875a^2$ with $a=14\cdot 5^{i}$ for $i=0,1,2$.
\begin{center}
\begin{equation*}
\begin{array}{|l|l|}
  \hline
  a          & \mbox{Integral solutions}\;(Y, \pm x)\;\mbox{of}\;x^2=Y^3+6875a^2  \\
  \hline
  14          & (14, 1162), (350, 6650)\\
  5\cdot 14   & (-259, 4039), (-250, 4250), (350, 8750), (848750, 781933250)\\
  25\cdot 14  & (-875, 13125), (-250, 28750), (350, 29750), (1750, 78750), \\
                &(5614, 421638)\\
   \hline
\end{array}
\end{equation*}
\end{center}
\begin{center}
Table 8.
\end{center}

(8) We consider the problem of finding all integer points on the curves $x^2=aY^4+23161$ with $a=248\cdot 6^{i}$ for $i=0,1,2,3$.
\begin{center}
\begin{equation*}
\begin{array}{|l|l|}
  \hline
  a          & \mbox{Integral solutions}\;(\pm Y, \pm x)\;\mbox{of}\;x^2=aY^4+23161  \\
  \hline
  248          & (1, 153), (6, 587)\\
  6\cdot 248   & (1, 157), (6, 1397)\\
  6^2\cdot 248  & \mbox{no integral points} \\
  6^3\cdot 248  & (1,277)\\
   \hline
\end{array}
\end{equation*}
\end{center}
\begin{center}
Table 9.
\end{center}

(9) We consider the problem of finding all integer points on the curves $x^2=aY^4+19379701008$ with $a=1513\cdot 6^{i}$ for $i=0,1,2,3$.
\begin{center}
\begin{equation*}
\begin{array}{|l|l|}
  \hline
  a          & \mbox{Integral solutions}\;(\pm Y, \pm x)\;\mbox{of}\;x^2=aY^4+19379701008  \\
  \hline
  1513            & (1, 139211), (216, 1820124)\\
  6\cdot 1513     & (36, 186084)\\
  6^2\cdot 1513   & (36, 332964)\\
  6^{3}\cdot 1513 &(6, 140724)\\
   \hline
\end{array}
\end{equation*}
\end{center}
\begin{center}
Table 10.
\end{center}

\bigskip

Summing up: A quick inspection of the corresponding solution sets reveals that the only integral solutions of our equations are given in the statement of our theorem.

Our theorem is proved.
\end{proof}



\section{The equation $x^2=k^{n}+B$}\label{sec3}

In this section we are interested in finding equations of the form $x^2=k^n+B$ with "many" solutions and satisfying the condition $B\not\equiv 0\pmod{k^2}$. In order to find examples with at least three solutions we need a small modification to the method presented in Section \ref{sec2}. Indeed, here we need to assume that $A=k^{q}$ for a fixed positive integer $q<p$. Then we will have the equation $x^2=k^n+B$ with at least two solutions at $n=q, p$. In \cite[Par. 3]{Bo1} Beukers found that if
$$k=4t^2+\epsilon,\quad B=\Big(\frac{k^{m}-\epsilon}{4t}\Big)^2-k^{m},$$
where $t, m\in\N$ and $\epsilon\in\{-1,1\}$ then the Diophantine equation $x^2=k^n+B$ has at least three solutions given by
\begin{equation*}
(x,n)=\Big(\frac{k^{m}-\epsilon}{4t}-2t, 1\Big),\quad \Big(\frac{k^{m}-\epsilon}{4t}, m\Big), \quad \Big(2tk^{m}+\epsilon\frac{k^{m}-\epsilon}{4t}, 2m+1\Big).
\end{equation*}
Note that in this case $k$ is odd. A question arises whether it is possible to find similar families with $k$ even. In the next theorem we show that there are infinitely many $k$'s such that the equation $x^2=k^n+B$ has at least three solutions in positive integers and $B\not\equiv 0\pmod{k^2}$. More precisely, we have the following result:

\begin{thm}
For positive integer $t$ and a non-negative integer $m$ the Diophantine equation
\begin{equation*}
x^2=(2t)^{n}+t^2((2t)^{2(m+2)}-2(2t+1)(2t)^{m+2}+(2t-1)^2)
\end{equation*}
has at least three solutions in positive integers $(x,n)$ given by:
\begin{equation*}
(x,n)=(t((2t)^{m+2}-2t-1), 3), (t((2t)^{m+2}-2t+1), m+4), (t((2t)^{m+2}+2t-1), m+5).
\end{equation*}
\end{thm}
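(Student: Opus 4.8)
The plan is a direct verification: since the three candidate pairs are written out explicitly, it suffices to substitute each of them into the equation, check that an identity holds, and then record the (immediate) facts that all entries are positive integers and that the three exponents are pairwise distinct.

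First I would set $k=2t$ and $a=k^{m+2}=(2t)^{m+2}$, so that the additive constant on the right becomes $B=t^2\bigl(a^2-2(2t+1)a+(2t-1)^2\bigr)$. Completing the square in $a$ gives
\[
a^2-2(2t+1)a+(2t-1)^2=\bigl(a-(2t+1)\bigr)^2-\bigl[(2t+1)^2-(2t-1)^2\bigr]=\bigl(a-(2t+1)\bigr)^2-8t,
\]
hence
\[
B=t^2\bigl(a-(2t+1)\bigr)^2-8t^3=t^2\bigl(a-(2t+1)\bigr)^2-k^3 .
\]
This single reformulation does essentially all the work, and it is the only genuine idea in the proof.

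Next I would evaluate $k^n+B$ at $n=3,\ m+4,\ m+5$. For $n=3$ the term $k^3$ cancels and $k^3+B=t^2\bigl(a-(2t+1)\bigr)^2=\bigl(t((2t)^{m+2}-2t-1)\bigr)^2$, which is exactly the first listed solution. For $n=m+4$ one has $k^{m+4}=k^2a=4t^2a$; adding $4t^2a$ to $B$ changes the middle coefficient of the quadratic in $a$ from $-2t^2(2t+1)$ to $-2t^2(2t-1)$ while the constant term $t^2(2t-1)^2$ is unchanged, so $k^{m+4}+B=t^2\bigl(a-(2t-1)\bigr)^2=\bigl(t((2t)^{m+2}-2t+1)\bigr)^2$, the second listed solution. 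For $n=m+5$ one has $k^{m+5}=k^3a=8t^3a$; adding $8t^3a$ turns the middle coefficient into $+2t^2(2t-1)$, so $k^{m+5}+B=t^2\bigl(a+(2t-1)\bigr)^2=\bigl(t((2t)^{m+2}+2t-1)\bigr)^2$, the third listed solution. Each of these three reductions is just the observation that, after adding the relevant multiple of $a$, the quadratic $t^2a^2\mp 2t^2(2t-1)a+t^2(2t-1)^2$ is again a perfect square.

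Finally I would note the positivity and distinctness: $a=(2t)^{m+2}\ge 4$ and $4t^2-2t-1>0$ for $t\ge 1$ give $a>2t+1$, so the smallest of the three $x$-values, $t(a-2t-1)$, is already a positive integer, and the other two, $t(a-2t+1)$ and $t(a+2t-1)$, are strictly larger; likewise the exponents satisfy $3<m+4<m+5$, so they are pairwise distinct. Hence the equation has (at least) the three stated solutions in positive integers. I do not expect any real obstacle here: once $B$ is rewritten as $t^2(a-(2t+1))^2-k^3$, everything reduces to routine algebra and bookkeeping, and the displayed solutions can be left to the reader to confirm if a shorter write-up is preferred.
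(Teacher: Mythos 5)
Your verification is correct, and it is essentially the same approach as the paper: the paper's proof simply states that the check that the displayed solutions satisfy the equation is left to the reader, which is exactly the direct substitution you carry out (your completion of the square, $B=t^2\bigl(a-(2t+1)\bigr)^2-(2t)^3$ with $a=(2t)^{m+2}$, is a clean way to organize that check). No discrepancy to report.
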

\begin{proof}
We left the simple check that the displayed solutions satisfy the corresponding equation to the reader.
\end{proof}

Let us observe that in the Beukers family we have $\op{gcd}(k,B)=1$. However, in our family with $k$ even this condition is not satisfied. An interesting question arises whether we can find equations $x^2=k^n+B$ having at last three solutions in positive integers with $k, B$ satisfying the condition $\gcd(B,k)=1$ with even $k$ and not of the form $2^{m}$ or with odd $k$ not of the form $4t^2+\epsilon$. It is quite interesting that we were unable to find an odd integer $k$ not of the form $4t^2+\epsilon$ and an integer $B$ such that $\op{gcd}(k,B)=1$ and the equation $x^2=k^n+B$ has at least three solutions. We considered the equations $x^2=k^n+B$ with $k\in\{7,9,11,13,19\}$ which have two solutions $n=p, q$ with $p<q<40$.  We then searched for further solution in $n\leq 100$. From the work of Bauer and Bennett \cite{Ben} we know that if $k$ is a prime number and $B$ is positive then the equation $x^2=k^n+B$ has at most three solutions satisfying $n\geq 1$. This suggest the following natural problem, which in the case $\epsilon=1$ was stated by Bauer and Bennett in \cite{Ben}.

\begin{prob}
Find an integer $B$ and an odd positive integer $k$ not of the form $4t^2+\epsilon$ with $t\in\N, \epsilon\in\{-1,1\}$, such that $\op{gcd}(k,B)=1$ and the Diophantine equation $x^2=k^n+B$ has at least three solutions in positive integers.
\end{prob}

What is going on in the case of even values of $k$? The first candidates which should be investigated are $k=6, 10, 12, ...$. Using the method described in Section \ref{sec2} we performed numerical calculations and found that if the equation $x^2=k^n+B$ has at least three solutions in non-negative integers for $k\in \{6, 10, 12, 14, 18, 20\}$ with the smallest solution $\leq 40, n\leq 100$ and $\gcd(B,k)=1$, then they are exactly three solutions of our problem. More precisely, we have the following result.

\begin{thm}
Each of the following Diophantine equations has exactly three solutions in  positive integers $(x,n)$:
\begin{equation*}
\begin{array}{lll}
 & x^2=6^{n}+2185,             & n=3, 4, 6,  \\
 & x^2=6^{n}+274837012705,     & n=4, 12, 13,\\
  & x^2=12^{n}+25029865,        & n=2, 6, 8.
 \end{array}
\end{equation*}
\end{thm}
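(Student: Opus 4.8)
The plan is to imitate, almost verbatim, the elliptic-curve argument in the second part of the proof of Theorem~\ref{sixsolutions}. First I would check by direct substitution that the displayed pairs really are solutions; for example $6^3+2185=2401=49^2$, $6^4+2185=3481=59^2$ and $6^6+2185=48841=221^2$, and analogously for the remaining two equations. This gives the ``at least three'' half, so everything reduces to proving that no further solutions exist.

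For the reduction, fix $k\in\{6,12\}$ and write $n=3m+i$ with $m\ge0$ and $i\in\{0,1,2\}$ (a unique such representation exists). Then $k^n=k^i(k^m)^3$, so multiplying $x^2=k^n+B$ by $k^{2i}$ gives
\begin{equation*}
(k^i x)^2=(k^{i+m})^3+k^{2i}B .
\end{equation*}
Hence every solution with $n\equiv i\pmod 3$ yields an integral point $(Y,X)=(k^{i+m},\,k^i x)$ on the Mordell curve $E_{k,i}:X^2=Y^3+k^{2i}B$ whose $Y$-coordinate is a power of $k$ with exponent at least $i$; conversely, from a complete list of the integral points on $E_{k,i}$ one recovers all admissible $n$ by keeping the points with $Y=k^{i+m}$, $m\ge0$, and setting $n=3m+i$, $x=X/k^i$ (which is automatically an integer, since $k^{2i}\mid X^2$). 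Thus the three stated equations amount to determining all integral points on the nine Mordell curves $E_{6,i}$ (for $i=0,1,2$ and $B\in\{2185,\,274837012705\}$) and $E_{12,i}$ (for $i=0,1,2$ and $B=25029865$).

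The substantive step, and the only one that is not a routine check, is to produce these lists of integral points \emph{with proof}. I would do this using the {\tt IntegralPoints} procedure of Magma \cite{Mag}, exactly as in Theorem~\ref{sixsolutions}: internally this requires computing the Mordell--Weil group of each $E_{k,i}$ by descent and then bounding the coefficients in the elliptic-logarithm parametrisation of the integral points via an effective estimate for linear forms in elliptic logarithms, followed by a lattice reduction and a short enumeration. The expected obstacle is the familiar one: should a rank or a saturation computation fail to terminate unconditionally for one of the curves with the larger constants (e.g.\ $k=6$, $i=2$, $B=274837012705$), I would fall back on the elementary approach of Remark~\ref{rem1} (compare \cite{Sti,Has,Tza}) --- splitting instead according to $n=2m+i$, $i\in\{0,1\}$, into the binary quadratic-form equations $x^2-k^i y^2=B$ with $y=k^m$, and analysing the linear recurrences that parametrise their solution sets modulo a suitably chosen modulus to exclude the spurious values of $n$. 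With the integral points in hand I would tabulate them in the format of Tables~1--10, observe that the only ones whose $Y$-coordinate is a power of $k$ are those listed in the statement, and conclude.
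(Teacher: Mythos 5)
Your proposal is correct in outline and, for the first equation, coincides with the paper's proof: the paper reduces $x^2=6^{n}+2185$ to the three Mordell curves $x^2=Y^3+2185a^2$ with $a\in\{1,6,36\}$, computes their integral points with Magma's {\tt IntegralPoints}, and keeps the points whose $Y$-coordinate is a power of $6$ --- exactly your curves $E_{6,i}$. For the other two equations the paper takes different, computationally lighter, routes, although your uniform cubic reduction would also work in principle. For $x^2=6^{n}+274837012705$ it writes $n=4m+i$ and passes to the quartic curves $x^2=aY^4+274837012705$ with $a=6^{i}$, $i=0,1,2,3$; the cases $a=1$ and $a=6^2$ then become a difference of two squares and are dispatched by factorization, so only $a=6$ and $a=6^3$ need {\tt IntegralQuarticPoints}. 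Your alternative would instead require an unconditional integral-point computation on $X^2=Y^3+6^{4}\cdot 274837012705$, whose constant term is of size about $3.6\cdot 10^{14}$ --- precisely the curve where you yourself anticipate the descent or saturation step may not terminate, so the paper's choice of decomposition is not cosmetic. For $x^2=12^{n}+25029865$ the paper avoids curves entirely: for even $n=2m$ it factors $(x-12^{m})(x+12^{m})=25029865$ and enumerates divisors, and for odd $n$ it shows $x^2-12y^2=25029865$ has no solutions modulo $5$; this is essentially your own fallback plan and is simpler than any elliptic-curve computation. In short, the logical skeleton (verify the listed solutions, reduce to finitely many curves or norm-form equations, determine all their integral points, filter for power-of-$k$ coordinates) matches the paper's; the paper just tailors the reduction to each equation so that the heavy Magma computations are as small as possible, whereas you apply one reduction uniformly and concentrate the risk in the integral-point step.
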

\begin{proof}
\noindent As before we consider the problem of finding all integral points on the curve $x^2=Y^3+2185a^{2}$ with $a\in \{1,6,36\}$. Using Magma one more time we have obtained the characterizations of integral points on corresponding curves. We gather these results in the table below.
\begin{center}
\begin{tabular}{|l|l|}
  \hline
  $a$          & Integral solutions $(Y, \pm x)$ of $x^2=Y^3+2185a^2$  \\
  \hline
  $1$          & (6, 49), (36, 221), (39, 248), (156, 1949) \\
  $6$          & (36, 354)\\
  $36$         & (9, 1683), (16, 1684), (144, 2412), (864, 25452), (231336, 111266604)\\
   \hline
\end{tabular}
\end{center}
\begin{center}
Table 11.
\end{center}
A quick look into the Table 11 reveals all instances of integral points for which $Y$-th coordinate is a power of 6.

We consider now the problem of finding all integral points on the curves $x^2=aY^4+274837012705$ with $a\in\{1, 6, 36, 216\}$. The cases $a=1$ and $a=6^2$ are easy because we are dealing with equations which can be easily solved by factorization. The cases $a=6$ and $a=216$ were solved with the help of the {\tt IntegralQuarticPoints} procedure in Magma. All integral solutions of the considered equations are given below.
\begin{center}
\begin{equation*}
\begin{array}{|l|l|}
  \hline
  a          & \mbox{Integral solutions}\;(\pm Y, \pm x)\;\mbox{of}\;x^2=aY^4+274837012705  \\
  \hline
  1          & (384, 524303), (1296, 526321), (1325424, 1525921777)\\
  6          & (216, 536561)\\
  36         & \mbox{no integral solutions} \\
  216        & \mbox{no integral solutions}\\
   \hline
\end{array}
\end{equation*}
\end{center}
\begin{center}
Table 12.
\end{center}

The third equation is very easy. Indeed, if $n$ is even, say $n=2m$, we write $(x-12^m)(x+12^m)=25029865$. Considering all possible factorizations of 25029865 we get the solutions $(x,m)=(5003,1),\;(x,m)=(5293, 3)$ and  $(x,m)=(21331, 4)$. If $n$ is odd, say $n=2m+1$, then we consider the equation $x^2-12y^2=25029865$, where $y=12^{m}$. However, one can easily check this equation has no solutions mod 5. Summing up, we proved that all solutions of $x^2=12^{n}+25029865$ correspond to $n=2,6,8$.

\end{proof}

Motivated by the above examples we state the following series of conjectures concerning the solutions of certain Lebesgue-Nagell-Ramanujan type equations.

\begin{conj}
If $(x,y,n)$, where $x, y$ are positive integers and $n\geq 3$, are solutions of the Diophantine equation $x^2=y^n+2185$ then:
\begin{equation*}
\begin{array}{lll}
  n=3, &  & (x,y)=(49,6),\;(221,36),\;(248,39),\;(1949,156),\\
  n=4, &  & (x,y)=(59,6) \\
  n=6, &  & (x,y)=(221,6).
\end{array}
\end{equation*}
\end{conj}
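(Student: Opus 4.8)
The plan is to resolve $x^{2}=y^{n}+2185$ by splitting on the parity of $n$, reducing composite exponents to prime ones, and then handling the three regimes that remain: even $n$ (elementary), $n=3$ (the Mordell curve already dealt with above), and $n=p\geq 5$ prime, which is the real difficulty.

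For even $n=2m$ with $m\geq 2$ I would factor $(x-y^{m})(x+y^{m})=2185=5\cdot 19\cdot 23$; since $2185$ is odd both factors are odd, and the four factorisations $2185=d\cdot(2185/d)$ with $d<2185/d$ yield $y^{m}\in\{1092,216,48,36\}$. Only $216=6^{3}$ and $36=6^{2}$ are perfect powers, so $m\leq 3$, forcing $(m,y)\in\{(3,6),(2,6)\}$, which gives $n=6$ with $(x,y)=(221,6)$ and $n=4$ with $(x,y)=(59,6)$, and nothing for even $n\geq 8$.

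Next I would pass from composite to prime exponents: if $n$ is odd and $p\mid n$ is prime, a solution $(x,y,n)$ produces a solution $(x,y^{n/p},p)$ of $X^{2}=Y^{p}+2185$. When $p=3$ the complete list of integral points on $x^{2}=Y^{3}+2185$ found above (Table~11, the row $a=1$) forces $y^{n/3}\in\{6,36,39,156\}$, whose only perfect power is $36=6^{2}$; hence $n/3\leq 2$, leaving only $n=3$ --- whose four integral points are precisely the solutions claimed for $n=3$ --- and the already-treated $n=6$. Consequently an odd composite $n$ is eliminated either because it is a power of $3$ (then $n/3\geq 3$ and no element of $\{6,36,39,156\}$ is a perfect power of exponent $\geq 3$) or because it has a prime factor $p\geq 5$, which reduces to the prime case. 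It therefore remains to prove that $x^{2}=y^{p}+2185$ has no positive integer solution for every prime $p\geq 5$.

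This last point is \emph{the main obstacle}. For $p=5$ a single congruence settles it: fifth powers modulo $11$ lie in $\{0,1,10\}$ and $2185\equiv 7\pmod{11}$, so a solution would force $x^{2}\in\{6,7,8\}\pmod{11}$, none of which is a quadratic residue. For $p\geq 7$ no finite set of congruences can suffice, and I would invoke the machinery built for Lebesgue--Nagell equations $x^{2}=y^{p}+c$: attach to a hypothetical solution of $y^{p}=x^{2}-2185$ a Frey elliptic curve such as $Y^{2}=X^{3}+2xX^{2}+2185X$, of discriminant $64\cdot 2185^{2}y^{p}$ (the precise choice depending on $x,y$ modulo small powers of $2$), compute its conductor, apply Ribet's level-lowering, and eliminate the newforms at the resulting level, which is supported on $\{2,5,19,23\}$; any surviving newform would be ruled out by a second Frey curve. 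The finitely many exponents below an effective bound --- coming from the modular method or from an estimate for linear forms in logarithms --- would then be cleared by solving the superelliptic curves $Y^{2}=X^{p}+2185$ directly, via Chabauty or a Mordell--Weil sieve after computing the relevant Mordell--Weil ranks. The crux is that the composite constant $2185=5\cdot 19\cdot 23$ makes the level-lowered conductor large, so there are many newforms to eliminate, and for the smallest residual exponents $p\geq 7$ the Jacobian of $Y^{2}=X^{p}+2185$ may have rank beyond the reach of Chabauty; it is this lack of a uniform argument for all $p\geq 7$ that makes the assertion a conjecture rather than a theorem.
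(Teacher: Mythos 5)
This statement is stated in the paper as a conjecture and is given no proof there, so there is nothing to compare your argument against; the relevant question is whether your attempt actually closes the problem, and it does not. The parts you carry out are correct: for even $n=2m\geq 4$ the factorisation $(x-y^{m})(x+y^{m})=5\cdot 19\cdot 23$ does force $y^{m}\in\{1092,216,48,36\}$ and hence exactly the solutions $(59,6)$ at $n=4$ and $(221,6)$ at $n=6$; for $3\mid n$ the reduction to the Mordell curve $X^{2}=Y^{3}+2185$ combined with the integral points $(6,36,39,156)$ from Table~11 (row $a=1$) settles $n=3$ and kills all odd multiples of $3$ exceeding $3$; and the congruence modulo $11$ is a clean disposal of $p=5$, since fifth powers lie in $\{0,1,10\}$, $2185\equiv 7$, and none of $6,7,8$ is a quadratic residue modulo $11$. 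These reductions go beyond anything in the paper and would be a genuine contribution toward the conjecture.

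The gap is the one you name yourself: every prime exponent $p\geq 7$ is untouched. What you offer there is a programme (Frey curve, level-lowering, elimination of newforms at a level supported on $\{2,5,19,23\}$, Chabauty or a Mordell--Weil sieve for small residual $p$), not an argument: no conductor is computed, no modularity or irreducibility hypotheses are verified, no newforms are eliminated, and no bound on $p$ is actually derived. Since $2185=5\cdot 19\cdot 23$ is composite and not of any of the shapes for which the Lebesgue--Nagell literature gives a ready-made theorem, none of these steps is routine, and your own closing remark concedes that the Jacobians involved may be out of reach of Chabauty. So the submission should be read as a correct partial result (all $n$ with a prime factor in $\{2,3,5\}$) plus an honest research plan for the rest; it is not a proof of the conjecture, and the conjecture remains a conjecture.
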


\begin{conj}
If $(x,y,n)$, where $x, y$ are positive integers and $n\geq 3$, are solutions of the Diophantine equation $x^2=y^n+274837012705$ then:
\begin{equation*}
\begin{array}{lll}
  n=3, &  & (x,y)=(524303, 384),\;(526321, 1296),\;(1525921777, 1325424),\\
  n=4, &  & (x,y)=(526321,216) \\
  n=6, &  & (x,y)=(526321,36),\\
  n=12,&  & (x,y)=(526321,6),\\
  n=13,&  & (x,y)=(536561,6).
\end{array}
\end{equation*}
\end{conj}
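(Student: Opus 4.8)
The plan is to split the analysis according to the arithmetic of the exponent $n$, reducing all but one case to the determination of integral points on explicit curves of genus at most $1$. Throughout put $B=274837012705$. The first observation is that it suffices to treat prime exponents together with the exponents whose only prime factors are $2$ and $3$: if $n$ has a prime divisor $p\ge 5$ then $(x,\,y^{n/p},\,p)$ solves $x^2=(y^{n/p})^p+B$, so it is enough to find, for every prime $p\ge 5$, all positive integers $z$ with $x^2=z^p+B$; and if every prime divisor of $n$ lies in $\{2,3\}$ then, since $n\ge 3$, either $3\mid n$ or $n=2^a$ with $a\ge 2$.

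The cases built from $2$ and $3$ are routine, and of the same type as the computations already carried out in Section~\ref{sec3}. For even $n=2m$ one has $(x-y^m)(x+y^m)=B$, so running over the finitely many factorizations $B=uv$ with $u\le v$ and $u\equiv v\pmod 2$ yields $x=(u+v)/2$ and $y^m=(v-u)/2$, and one retains those for which $(v-u)/2$ is a perfect power with admissible exponent $m\ge 2$. For $3\mid n$, say $n=3\ell$, the pair $(y^{\ell},x)$ is an integral point on the Mordell curve $v^2=u^3+B$; after computing its Mordell--Weil group by descent one enumerates all integral points (for instance by the elliptic-logarithm method, as implemented in Magma \cite{Mag}) and keeps those whose first coordinate is an $\ell$-th power. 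This should reproduce the tabulated solutions with $n$ even or $3\mid n$; in particular the integral point with first coordinate $1296=6^4$ on the Mordell curve accounts for $n=3$, $6$ and $12$ at once.

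The remaining case, odd primes $p\ge 5$, is the genuinely hard one: here one must both recover the honest solution $(x,z,p)=(536561,6,13)$ (note $6^{13}+B=536561^2$) and prove there is no other. Two classical routes are available. The first is Baker's method: treating $x^2-B=z^p$ through lower bounds for linear forms in logarithms, in the spirit of the classical analysis of $x^2+D=y^n$, one obtains an explicit bound $p<N$, after which each equation $x^2=z^p+B$ with $5\le p\le N$ is resolved as an integral-points problem on the hyperelliptic curve $X^2=Z^p+B$. The second is the modular method: attach a Frey--Hellegouarch curve to a hypothetical solution of $x^2=z^p+B$, apply Ribet level-lowering, and eliminate the finitely many resulting newforms, handling a short list of small exponents $p$ separately.

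I expect this last case to be the real obstacle, which is presumably why the statement is only conjectural. The unconditional bound on $p$ coming from linear forms in logarithms, for a $B$ of this magnitude, is very likely far too large for the residual hyperelliptic equations to be solved in practice; and the modular route is sensitive to the prime factorization of $B=274837012705$, which does not have the special shape --- a small prime power, or $\pm 1$ times one --- for which the level-lowering output is known to be controllable. The initial reduction and the cases built from $2$ and $3$, by contrast, are entirely mechanical.
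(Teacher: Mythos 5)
You have correctly diagnosed the situation, but to be clear: there is no proof in the paper to compare against, because this statement is one of the paper's \emph{conjectures}. The only evidence the paper supplies is the complete resolution of the single equation $x^2=6^n+274837012705$ (via the curves $x^2=aY^4+B$ with $a=6^i$, Table~12) together with the list of integral points whose first row furnishes the $n=3$ data; the general statement for arbitrary $y$ is extrapolated from that computation and is not proved. So your proposal should be judged as an attempt to prove an open statement, and by your own account it does not succeed.

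Your reduction of the exponents $n=2^a3^b$ is sound: the even case collapses to the finitely many factorizations of the odd number $B=274837012705$, and the case $3\mid n$ collapses to the integral points of the single Mordell curve $v^2=u^3+B$, whose points with first coordinate $384$, $1296=6^4$ and $1325424$ account for the rows $n=3,6,12$ of the conjecture, while $n=4$ comes from the factorization $x-y^2=u$, $x+y^2=v$. Carried out rigorously (the Mordell--Weil/integral-point computation for a curve of this size is feasible but must be certified, e.g. the rank bound may a priori be conditional), this would actually establish \emph{more} than the paper does, namely the conjecture restricted to all $n$ of the form $2^a3^b$. The genuine gap is exactly where you place it: exponents divisible by a prime $p\ge 5$, i.e. the family $x^2-B=z^p$. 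Here your two suggested routes both founder in practice. Baker's method gives a bound on $p$ only after fixing or bounding $z$, and for $x^2-D=z^p$ with $D>0$ one works in a \emph{real} quadratic field, where the unit group obstructs the clean descent available for $x^2+D=y^n$; the resulting bounds are far beyond the range in which the residual superelliptic equations can be solved. The modular method requires control of the newforms at a level governed by the conductor attached to $B=5\cdot 41\cdot 61\cdot 21977981$, which has none of the special structure (prime power, or $1$ plus a power) exploited in the literature you implicitly invoke; moreover the existing uniqueness results (Beukers, Bauer--Bennett) concern a fixed prime base and negative or unit $B$, so none applies. Since the solution $(x,z,p)=(536561,6,13)$ really exists, no congruence or local argument can dispose of all primes $p\ge 5$ at once, and the case analysis cannot be closed by the tools you name. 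Your proposal is therefore an honest reduction with an explicitly acknowledged, and genuinely open, missing case --- which is consistent with the paper leaving the statement as a conjecture.
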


\begin{conj}
If $(x,y,n)$, where $x, y$ are positive integers and $n\geq 3$, are solutions of the Diophantine equation $x^2=y^n+25029865$ then:
\begin{equation*}
\begin{array}{lll}
  n=3, &  & (x,y)=(5293, 144),\\
  n=4, &  & (x,y)=(21331, 144),\;(736181, 858), \\
  n=6, &  & (x,y)=(5293, 12),\\
  n=8, &  & (x,y)=(21331, 12).
\end{array}
\end{equation*}
\end{conj}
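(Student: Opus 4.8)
\emph{Proof proposal.} The plan is to split the analysis by the parity of $n$ and to reduce odd exponents to prime ones. Write $C:=25029865=5\cdot 7\cdot 17\cdot 23\cdot 31\cdot 59$, so that $C$ is odd, squarefree, and a product of six distinct primes. If $n\geq 3$ is \emph{even}, put $n=2m$ and observe that
\begin{equation*}
(x-y^{m})(x+y^{m})=x^2-y^{n}=C.
\end{equation*}
Every factorization $C=d\cdot e$ with $0<d<e$ has both factors odd, so $x=\tfrac12(d+e)$ and $y^{m}=\tfrac12(e-d)$ are integers; since $C$ has exactly $32$ such factorizations, every even-exponent solution arises by running through the $32$ resulting values $v:=\tfrac12(e-d)$ and asking, for each fixed integer $v$, whether $v$ is a perfect $m$-th power for some $m\geq 2$. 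As a fixed integer admits only finitely many perfect-power representations, this is an entirely finite verification. I expect to find that precisely three of the $v$ are nontrivial perfect powers, namely $1728=12^{3}$, $20736=12^{4}=144^{2}$ and $736164=858^{2}$; these reproduce exactly the listed solutions with $n=6$ (base $12$), with $n=8$ (base $12$) and $n=4$ (base $144$), and with $n=4$ (base $858$), the value $20736$ doing double duty as both a square and a fourth power.

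For \emph{odd} $n\geq 3$ I would reduce to prime exponents: such an $n$ is divisible by an odd prime $p$, and setting $Y=y^{n/p}$ converts the equation into $x^2=Y^{p}+C$; once this is solved for each odd prime $p$ it remains only to record which of the finitely many resulting $Y$ are perfect $(n/p)$-th powers. For $p=3$ this is the Mordell equation $x^2=Y^{3}+C$, whose integral points are finite in number and can be determined exactly with the \texttt{IntegralPoints} routine in Magma, exactly as in the proof of Theorem~\ref{sixsolutions}; reading off the points with positive base, I expect this to reproduce only the listed solution $(x,y)=(5293,144)$ at $n=3$ (note $144$ is not a higher perfect power, so it contributes nothing to larger odd $n$).

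The genuine difficulty is the case of odd primes $p\geq 5$, where $x^2=Y^{p}+C$ is a Lebesgue--Nagell type equation with a \emph{free} base and a large, many-factored constant. For $p=5,7$ the curves are hyperelliptic of genus $2$ and $3$, and one can attempt to list their integral points directly (by Chabauty-type arguments or the hyperelliptic integral-points machinery in Magma), the goal being to show no admissible $Y$ occurs. For the remaining primes the natural tool is the modular method: attach a Frey--Hellegouarch curve to the relation $Y^{p}=x^2-C$, apply Ribet level-lowering, and eliminate the residual mod-$p$ representations by comparison with the newforms at the predicted level, whose conductor is supported on $2$ and the primes dividing $C$; to make this unconditional one must first bound $p$ via linear forms in logarithms and dispose of the small exceptional primes by local congruence arguments. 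The main obstacle is precisely this last step---controlling \emph{all} sufficiently large prime exponents simultaneously for this specific large $C$ with six prime factors---and it is the reason the statement is advanced as a conjecture rather than a theorem.
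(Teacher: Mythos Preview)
The paper offers \emph{no} proof of this statement: it is labeled a \textbf{Conjecture} and is stated immediately after the theorem on $x^2=12^{n}+25029865$ precisely because the authors cannot establish it. So there is no ``paper's proof'' to compare against, and your proposal should be read as a strategy sketch rather than a proof.

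That said, your decomposition is sound and matches the spirit of the paper's methods. Your treatment of even $n$ via the factorisation $(x-y^{m})(x+y^{m})=C$ is exactly the argument the paper uses in the preceding theorem (there with $y$ fixed to $12$); with $C$ squarefree and six prime factors, the $32$ factorizations give a genuinely finite check, and your expected perfect-power values $1728,\,20736,\,736164$ are consistent with the listed solutions. Your reduction of odd $n$ to odd prime exponents and the handling of $p=3$ by Magma's \texttt{IntegralPoints} on the Mordell curve $x^{2}=Y^{3}+C$ also mirror the paper's computational style in Section~\ref{sec2}, though note the paper never actually carries out this particular Mordell computation.

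Where your proposal stops being a proof is exactly where the paper stops: the primes $p\ge 5$. You correctly identify the obstruction --- the base $Y$ is free, $C$ has six odd prime factors, and the modular method would require eliminating newforms at a level supported on $\{2,5,7,17,23,31,59\}$, which is computationally heavy and not guaranteed to succeed without auxiliary congruence input. Your appeal to linear forms in logarithms to bound $p$ is optimistic: the standard Lebesgue--Nagell machinery bounds $p$ effectively only under structural hypotheses on $C$ (e.g.\ few prime factors, or class-number constraints in $\mathbb{Q}(\sqrt{-C})$) that are not obviously met here. So your final paragraph is an honest statement of the gap, and it is precisely why the paper records this as a conjecture rather than a theorem.
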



\section{The equation $x^2=Ak^n+B$ with $A<0$}

In this section we consider the Diophantine equation $x^2=Ak^n+B$ with constraint $A<0$. Of course, in this case the equation has only finitely many solutions in integers and we have a useful bound for solutions: each $n$ which solves the equation $x^2=Ak^n+B$ with $A<0$ satisfies $n\leq \log_{k}\left(\frac{B}{|A|}\right)$.

However, we are mainly interested in the case $A=-1$. Thus we consider the equation $x^2+k^n=B$ and ask about characterization of those positive integers $k$ such that there exists an infinite set $\cal{B}$ such that for each $B\in\cal{B}$ we have $B\not\equiv 0\pmod{k^2}$ and our equation has at least three solutions in positive integers. We propose the following result:

\begin{thm}\label{A=-1:special}
Let $t$ be an indeterminate and put $k=t^2+1$. Then for each positive integer $m$, the Diophantine equation
\begin{equation*}
x^2+k^n=\frac{k^2}{4(k-1)}\left(k^{2m+2}+2(k-2)k^{m}+1\right),
\end{equation*}
treated as an equation in polynomials, has exactly three solutions in $(x,n)\in \Z[t]\times\N$, with the solutions corresponding to $n=0,\;m+2,\;2m+2$.
\end{thm}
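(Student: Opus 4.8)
The plan is to exhibit the three claimed solutions and then eliminate every other value of $n$ by studying, as a polynomial in the single variable $k$,
$$
G_n:=4(k-1)(B-k^n)=(k^{m+2}+k-2)^2-4(k-1)(k^n-1),
$$
where $B$ denotes the right-hand side. Since $4(k-1)=(2t)^2$, the equation $x^2=B-k^n$ is solvable in $\Q[t]$ exactly when $G_n$ is a perfect square. (In fact $B\in\Q[t]$ but not $\Z[t]$ in general, so ``$\Z[t]$'' in the statement should be read as $\Q[t]$; moreover ``square in $\Q[t]$ after $k=t^2+1$'' and ``square in $\Q[k]$'' amount to the same thing for the relevant $n$, because $k-1=t^2$ divides $G_n$ to even order.) The three solutions come from completing the square in $k^{2m+2}+2(k-2)k^{m}+1$ in three different ways, which yields
$$
G_0=(k^{m+2}+k-2)^2,\quad G_{m+2}=(k^{m+2}-k)^2,\quad G_{2m+2}=(k^{m+2}-2k^{m+1}+k)^2;
$$
dividing by $4(k-1)$ then exhibits $B-k^n$ as a square for $n=0,\,m+2,\,2m+2$. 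This is the routine part, and it also identifies the roots of the family $G_n$ that must be understood.

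For $n\geq 2m+3$ the polynomial $G_n$ has negative leading coefficient: the coefficient of $k^{2m+4}$ is $1-4$ when $n=2m+3$, and $\deg_kG_n=n+1$ with leading coefficient $-4$ when $n\geq 2m+4$. A nonzero polynomial with negative leading coefficient is never a square, so these $n$ are excluded. For $1\leq n\leq m+1$ the polynomial $(k^{m+2}+k-2)^2-G_n=4(k-1)(k^n-1)$ has $k$-degree $n+1\leq m+2$, which is too small: writing $G_n=M^2$ with $M$ monic of degree $m+2$, the factorisation $\bigl(k^{m+2}+k-2-M\bigr)\bigl(k^{m+2}+k-2+M\bigr)=4(k-1)(k^n-1)$ forces $\deg\bigl(k^{m+2}+k-2-M\bigr)=n-m-1\leq 0$. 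For $n\leq m$ this gives $M=k^{m+2}+k-2$ and hence $G_n=G_0$, impossible for $n\geq 1$; for $n=m+1$ the surviving constant factor must equal $2$ (compare leading coefficients), which reduces to the impossible identity $4k^{m+1}+8k-16=0$.

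The remaining range $m+3\leq n\leq 2m+1$ is the crux. With $W:=2+k+\cdots+k^{m+1}$ and $\Phi_j:=1+k+\cdots+k^{j-1}$ one has $k^{m+2}+k-2=(k-1)W$ and $k^{n}-1=(k-1)\Phi_n$, so $G_n=(k-1)^2\bigl(W^2-4\Phi_n\bigr)$; dividing out the square $(k-1)^2$ and then the square $k^2$ (the order of vanishing of $W^2-4\Phi_n$ at $k=0$ is exactly $2$), the question becomes whether $L_n:=\bigl(W^2-4\Phi_n\bigr)/k^2$ is a square. The $n=2m+2$ solution provides the reference value $L_{2m+2}=(k^m-\Phi_m)^2$, and subtracting gives the key identity
$$
L_n=(k^m-\Phi_m)^2+4k^{\,n-2}\Phi_{2m+2-n}.
$$
Here $\deg L_n=2m$, while the correction $4k^{\,n-2}\Phi_{2m+2-n}$ has degree $2m-1$ and is divisible by $k^{\,n-2}$ but no higher power, with $n-2\geq m+1$. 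If $L_n=(M'')^2$ with $M''$ monic of degree $m$, then $M''(0)=\pm1$ and
$$
\bigl(M''-(k^m-\Phi_m)\bigr)\bigl(M''+(k^m-\Phi_m)\bigr)=4k^{\,n-2}\Phi_{2m+2-n}.
$$
Exactly one factor on the left is divisible by $k$ (the difference if $M''(0)=-1$, the sum if $M''(0)=1$), so all of $k^{\,n-2}$ divides that factor; but the sum has degree $m$ and the difference degree $\leq m-1$, both strictly below $n-2$, so the factor must vanish. The difference being $0$ forces $M''=k^m-\Phi_m$, hence $\Phi_{2m+2-n}=0$, which is false; the sum being $0$ forces $M''=\Phi_m-k^m$, which is not monic. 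Either way we reach a contradiction, so $L_n$ is not a square and no solution occurs for $m+3\leq n\leq 2m+1$.

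The only genuine difficulty is this middle range, where the degree bound and the easy factorisation both break down. What makes it go through is pairing $L_n$ with the reference square coming from $n=2m+2$ and peeling off the two square factors $(k-1)^2$ and $k^2$ — precisely enough to bring the correction term below the degree of the reference square, so that its large $k$-adic valuation can be played off against a degree bound. Everywhere else the bookkeeping, including the harmless passage between squares in $\Q[t]$ (after $k=t^2+1$) and squares in $\Q[k]$, is routine.
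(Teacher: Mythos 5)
Your proof is correct, and it takes a genuinely different route from the paper's. The paper verifies the three displayed solutions and then eliminates every other $n$ by a single specialization: setting $t=1$ (so $k=2$) turns any polynomial solution into an integer solution of $x(1)^2+2^{n}=2^{2(m+1)}+1$, and Beukers' theorem (\cite[Theorem 2]{Bo}) states that this integer equation has solutions only for $n=0,\;m+2,\;2m+2$. You instead argue entirely inside $\Q[k]$: leading-coefficient and degree/factorization considerations dispose of $n\geq 2m+3$ and $1\leq n\leq m+1$, and the middle range $m+3\leq n\leq 2m+1$ is handled by comparing $L_{n}$ with the reference square $L_{2m+2}=(k^{m}-\Phi_{m})^{2}$ and playing the $k$-adic valuation $n-2\geq m+1$ of the correction term $4k^{n-2}\Phi_{2m+2-n}$ against the degree bound $m$ on the two factors. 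I checked the key identities (including $L_{n}-L_{2m+2}=4k^{n-2}\Phi_{2m+2-n}$) and the tacit claim in your parenthetical that the order of vanishing of $G_{n}$ at $k=1$ is even for the relevant $n$: it equals exactly $2$ for $1\leq n\leq 2m+1$ because $(m+3)^{2}-4n\geq (m-1)^{2}+4>0$ there, and this is indeed what legitimizes passing from squares in $\Q[t]$ to squares in $\Q[k]$; it would be worth spelling that computation out. What your approach buys is self-containedness --- no appeal to a nontrivial Diophantine theorem, and a direct proof of the polynomial statement rather than a deduction from one integer specialization; what the paper's approach buys is brevity. Your side remark that the statement should read $\Q[t]$ rather than $\Z[t]$ is also apt, since the right-hand side itself generally fails to lie in $\Z[t]$ (already for $m=2$ the $n=0$ solution has non-integral coefficients).
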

\begin{proof}
Let us denote the right hand side of our equation by $H_{m}(t)$. First we need to show that $H_{m}$ is indeed a polynomial with rational coefficients. In order to do this we need to check that $t=0$ is a double root of $H_{m}$. This is true for $m=0, 1$ and thus we can assume $m\geq 2$. It is clear that $t=0$ is a root of the polynomial $h_{m}(t)=k^{2m+2}+2(k-2)k^{m}+1$, where $k=k(t)=t^2+1$. Moreover, we know that $k'(0)=0$ and from the identity
$$
h_{m}'(t)=2k'(t)k(t)^{m-1}((m+1)k(t)^{m+2}+(m+1)k(t)-2m)
$$
we easily deduce that $H_{m}$ is a polynomial in $\Q[t]$. Let us observe that the following pairs $(x,n)$ are solutions of our equation:
\begin{equation*}
(x,n)=\Big(\frac{k^{m+2}+k-2}{2t}, 0\Big), \; \Big(\frac{k(k^{m+1}-1)}{2t}, m+2\Big),\; \Big(\frac{k(k^{m}(k-2)+1)}{2t},2m+2\Big).
\end{equation*}
We show now that there are no more solutions. In order to see this we note that a necessary condition for existence of a polynomial $x\in\Q[t]$ such that $x(t)^2+k^n=H_{m}(t)$ is the condition $x(1)^2+k(1)^n=H_{m}(1)=2^{2(m+1)}+1$. We are thus interested in integer solutions of the Diophantine equation $x(1)^2+2^{n}=2^{2(m+1)}+1$. Remarkably, this equation (with slightly different notation) was considered by Beukers in \cite[Theorem 2]{Bo}. He proved that the only solutions correspond to $n=0, m+2$ and $2m+2$ which are exactly our values for $n$. Our theorem is proved.

\end{proof}

It is clear that for any given integer value of $t$  we can compute $k$ and get the equation $x^2+k^n=B$ with at least three solutions in integers. The first few values of $k$ from the above theorem are $k=2,5,10,17,26,37,50,\ldots$. A question arises whether for $k$ not of the form $t^2+1$ we can find $B$ such that the equation $x^2+k^n=B$ has at least three solutions in integers. This seems to be a rather difficult question. We performed calculations for each $k<50$ not covered by the Theorem \ref{A=-1:special} in order to find integers $B$ with the property that the equation $x^2+k^n=B$ has at least three solutions in $n$ with $n\leq 50$. We collect these values together with corresponding solutions in table below. In fact, for the value of $B$ which was computed, the solutions presented in the third column of the table contain all solutions of the corresponding equation. It is interesting to note that we were unable to construct an infinite family of equations with $k\neq t^2+1$ and having at least three solutions in integers.

\begin{center}
\begin{tabular}{|l|l|l|}
  \hline
  $k$ & $B$ & values of $n$ such that $B-k^{n}$ is a square \\
   \hline
  6 & 8865               & 3,4,5 \\
  6 & 48177              & 3,5,6 \\
  6 & 2538945            & 4,7,8 \\
  6 & 334401777          & 7,9,10 \\
  6 & 1410808185         & 7,10,11 \\
  12& 448206057          & 5,7,8\\
  14& 166113185          & 4,6,7\\
  18& 4598905354020657   & 7,9,12\\
  21& 5340742            & 1,3,5\\
  22& 61234181657        & 5,7,8\\
  30& 739595025          & 3,5,6\\
  34& 170442204313460705 & 8,10,11\\
  40& 109475600          & 3,4,5\\
  40& 17264710025        & 3,5,6\\
  \hline
\end{tabular}
\end{center}
\begin{center}Table 13\end{center}


In the case of the equation $x^2=A2^{n}+B$ with $A<0$ we offer the following two conjectures.
\begin{conj}
For each positive integer $m$ the Diophantine equation
\begin{equation*}
x^2+(2^{m+1}+1)2^n=2^{4 (m+1)}+2^{3(m+1)}+2^{2 m}+2^{m+1}+1
\end{equation*}
has exactly four solutions in integers $(x,n)$ with $n=0, m+2, 2m+3, 3m+3$.
\end{conj}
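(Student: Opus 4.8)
The plan is to verify the four solutions and a sharp bound on $n$, to rewrite the equation as a factorisation identity whose two factors have a \emph{fixed} sum, and then to rule out further factorisations by a mixture of congruences, elementary facts about $2^{n}-1$, and the theory of representations by positive definite binary quadratic forms.

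\emph{First,} put $p=2^{m+1}+1$ and $C=2^{2m+2}+2^{m}=2^{m}(2^{m+2}+1)$. Then $C^{2}=2^{4(m+1)}+2^{3(m+1)}+2^{2m}$, so the right-hand side of the equation equals $C^{2}+p$, and the equation is equivalent to $x^{2}=C^{2}-p\,(2^{n}-1)$. A direct substitution, which I would leave to the reader, shows that $(x,n)=(C,0),(C-1,m+2),(C-p,2m+3),(2^{m}+1,3m+3)$ are solutions. Since $A=-p<0$ we have $p\,2^{n}\le C^{2}+p<2^{4m+5}$, hence $2^{m+n+1}\le p\,2^{n}<2^{4m+5}$ and $n\le 3m+3$; we normalise $x\ge 0$, and for $n\ge1$ we obtain $0<x<C$.

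\emph{Next,} for $n\ge1$ both sides of
\begin{equation*}
(C-x)(C+x)=(2^{m+1}+1)(2^{n}-1)
\end{equation*}
are odd, so writing $D=C-x\le E=C+x$ we get $DE=(2^{m+1}+1)(2^{n}-1)$ and, crucially, $D+E=2^{m+1}(2^{m+2}+1)$, a quantity \emph{independent of $n$}. Hence $D,E$ are the roots of $z^{2}-2^{m+1}(2^{m+2}+1)z+(2^{m+1}+1)(2^{n}-1)$, and the conjecture (for $n\ge1$) reduces to showing that this quadratic has integer roots only for $n\in\{m+2,2m+3,3m+3\}$, values at which $D$ equals $1$, $2^{m+1}+1$, and $(2^{m+1}-1)(2^{m+1}+1)$ respectively. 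Reducing modulo $p$ and using $2^{m+1}\equiv-1\pmod p$ (so $2^{2m+3}\equiv2$ and $D+E\equiv1\pmod p$) together with $DE\equiv0\pmod p$ forces $D(D-1)\equiv0\pmod p$, which already narrows the possibilities for $D$ considerably. I would then split on the parity of $n$: if $n=2l$ the equation is $x^{2}+p\,(2^{l})^{2}=C^{2}+p$, a representation of the fixed integer $C^{2}+p$ by the positive definite form $X^{2}+pY^{2}$; if $n=2l+1$ it is $x^{2}+2p\,(2^{l})^{2}=C^{2}+p$, a representation by $X^{2}+2pY^{2}$. Each form represents $C^{2}+p$ in only finitely many ways, and one wants precisely those representations with $Y$ a power of $2$; I expect these to be pinned down by $2$-adic valuation arguments together with congruences modulo small factors of $2^{m+1}+1$ and of the relevant $2^{n}-1$, using $\gcd(2^{a}-1,2^{b}-1)=2^{\gcd(a,b)}-1$, in the spirit of the elementary analysis sketched in Remark~\ref{rem1}; for any single value of $m$ this can moreover be executed mechanically with Magma, as in the proof of Theorem~\ref{sixsolutions}.

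\emph{The main obstacle} is uniformity in $m$, which is why the statement is only conjectured: the factor $2^{m+1}+1$ has no stable factorisation as $m$ varies — it is prime when $m+1$ is a power of $2$, but equals $3^{2}$ for $m=2$, $3\cdot 11$ for $m=4$, and so on — while the admissible range $1\le n\le 3m+3$ grows with $m$, so that neither a finite computation nor a fixed congruence bookkeeping closes the argument. A genuinely uniform proof seems to require either a parametric identity reducing the whole family, after a substitution, to a single equation already solved by Beukers in \cite{Bo} (the device that worked for Theorem~\ref{A=-1:special}), or an estimate from linear forms in two logarithms forcing $\max(m,n)$ to be bounded, after which the finitely many remaining cases could be cleared by the method above.
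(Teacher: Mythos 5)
You should be aware that the paper offers no proof of this statement: it is one of the conjectures of Section 4, supported only by numerical experiment, so there is no argument of the author's to compare yours against. Your preparatory computations are all correct and are worth recording: with $p=2^{m+1}+1$ and $C=2^{m}(2^{m+2}+1)$ one indeed has $C^2=2^{4(m+1)}+2^{3(m+1)}+2^{2m}$, the equation becomes $x^2=C^2-p(2^n-1)$, the four listed pairs are solutions (with $C-x$ equal to $0$, $1$, $p$ and $(2^{m+1}-1)p$ respectively), the bound $n\le 3m+3$ follows from $p\,2^n\le C^2+p<2^{4m+5}$, and the congruence $D(D-1)\equiv 0\pmod{p}$ for $D=C-x$ is a genuine constraint. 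This is more structural information than the paper itself records about the family.

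However, the proposal does not prove the conjecture, as you yourself state, and one of its intermediate steps is weaker than it appears. Since $D+E=2C$ and $DE=p(2^n-1)$ are both determined once $n$ is fixed, saying that ``the conjecture reduces to showing that the quadratic $z^{2}-2Cz+p(2^{n}-1)$ has integer roots only for $n\in\{m+2,2m+3,3m+3\}$'' is an exact restatement of the original question (the discriminant of that quadratic is $4(C^2-p(2^n-1))=4x^2$), so this reformulation by itself narrows nothing; the only new leverage comes from the mod-$p$ condition, which for composite $2^{m+1}+1$ still leaves many residue classes for $D$ and in any case does not control which divisors of $p(2^n-1)$ actually occur. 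The quadratic-form and Pell-equation route you sketch is the right analogue of the elementary analysis in Remark \ref{rem1} and of Stiller's method, and it would settle any \emph{fixed} $m$ (as would Magma, as in Theorem \ref{sixsolutions}), but you correctly identify that neither it nor the congruence bookkeeping is uniform in $m$. Closing the gap would require either a parametric reduction to a solved equation in the style of Theorem \ref{A=-1:special} (note that Beukers' theorem used there concerns $x^2+2^n=B$, i.e.\ $A=-1$, and does not apply to the coefficient $-(2^{m+1}+1)$ here) or an effective bound on $m$ from linear forms in logarithms followed by finite computation; as it stands, the statement remains a conjecture for you just as it does for the author.
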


\begin{conj}
For each positive integer $m$ the Diophantine equation
\begin{equation*}
x^2+\frac{1}{3}(2^{2m+6}-1)2^n=\frac{1}{9} \left(49\cdot 4^{2 m+5}-11\cdot 4^{m+3}+1\right)
\end{equation*}
has exactly four solutions in integers $(x,n)$ with $n=0, 3, 2m+7, 2m+8$.
\end{conj}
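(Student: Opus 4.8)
The plan is to reduce the statement to showing that $B-A\cdot 2^{n}$ is a perfect square for no $n$ other than $0,3,2m+7,2m+8$, where $A=\tfrac13(2^{2m+6}-1)$ and $B=\tfrac19\bigl(49\cdot 4^{2m+5}-11\cdot 4^{m+3}+1\bigr)$. The routine preliminaries are: $A,B\in\Z$ (because $2^{2m+6}-1\equiv 0\pmod 3$ and $(2^{2m+6}-1)^{2}\equiv 0\pmod 9$); the four displayed values of $n$ really are solutions; and, since $A>0$, any solution forces $A\cdot 2^{n}\le B$, hence $n\le 2m+8$. The engine of the argument is the identity obtained by multiplying the equation by $441$ and completing the square: putting
\[
P:=P(n)=49\cdot 2^{2m+5}-11-3\cdot 2^{n},
\]
the equation $x^{2}+A\cdot 2^{n}=B$ becomes equivalent to
\[
P^{2}-(21x)^{2}=9(2^{n}-1)(2^{n}-8).
\]
So one must decide, for $0\le n\le 2m+8$, when $G(n):=9(2^{n}-1)(2^{n}-8)$ is a difference of $P^{2}$ and a square.

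The range $0\le n\le m+4$ can be handled uniformly in $m$. For $n=0,3$ the right-hand side vanishes, so $21x=P$, which indeed has the integer solution $x=(7\cdot 2^{2m+5}-2)/3$ resp. $x=(7\cdot 2^{2m+5}-5)/3$ (using $2^{2m+5}\equiv 2\pmod 3$); these are the two ``small'' solutions. For $n=1,2$ one has $G(n)\in\{-54,-108\}$, so $(21x)^{2}>P^{2}$; since $21|x|$ and $|P|$ are integers, $(21x)^{2}-P^{2}\ge 2|P|+1\ge 2(49\cdot 2^{2m+5}-23)+1>108$ for every $m\ge 1$, a contradiction. For $4\le n\le m+4$ the number $G(n)$ is positive; writing $G(n)=(P-21x)(P+21x)$ with both factors positive integers and $d=P-21x\ge 1$ the smaller one, $2P=d+G(n)/d\le G(n)+1$, i.e. $98\cdot 2^{2m+5}\le 9\cdot 4^{n}-75\cdot 2^{n}+95$; but the right side is increasing in $n$ for $n\ge 2$ and at $n=m+4$ it is $<9\cdot 4^{m+4}=2304\cdot 4^{m}<3136\cdot 4^{m}=98\cdot 2^{2m+5}$, a contradiction. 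Finally, for the two ``large'' values $n=2m+7$ and $n=2m+8$ the relations $2^{n}=4\cdot 2^{2m+5}$, resp. $2^{n}=8\cdot 2^{2m+5}$, make $G(n)$ factor in the compatible way and return the last two solutions.

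What remains --- and is genuinely the hard part --- is the window $m+5\le n\le 2m+6$. Here $G(n)=72(2^{n}-1)(2^{n-3}-1)$ has $2$-adic valuation exactly $3$, so the two positive even factors $P-21x$ and $P+21x$ are $2\alpha$ and $4\beta$ in some order, with $\alpha,\beta$ odd and $\alpha\beta=9(2^{n}-1)(2^{n-3}-1)=:M(n)$; in particular $P=\alpha+2\beta$. Reducing modulo $2^{n-3}$, which divides both $49\cdot 2^{2m+5}$ and $3\cdot 2^{n}$ because $n\le 2m+8$, one finds $P\equiv -11$ and $M(n)\equiv 9$, hence $\alpha^{2}+11\alpha+18\equiv 0\pmod{2^{n-3}}$, i.e. $(\alpha+2)(\alpha+9)\equiv 0\pmod{2^{n-3}}$; as $\alpha$ is odd, $\alpha+2$ is a unit, so this forces $\alpha\equiv -9\pmod{2^{n-3}}$. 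The remaining task is to show that no odd divisor $\alpha$ of $M(n)$ with $\alpha\equiv -9\pmod{2^{n-3}}$ satisfies $\alpha+2M(n)/\alpha=P$; one would combine this congruence with the window $2M(n)/P<\alpha<P$, the bound $\alpha\le M(n)$, and the orders of magnitude $P\asymp 49\cdot 2^{2m+5}$, $M(n)\asymp \tfrac{9}{8}\cdot 4^{n}$.

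The main obstacle is carrying out precisely this last step uniformly in $m$. For instance at $n=m+5$ one can finish: the two roots of $\alpha+2M(n)/\alpha=P$ are respectively too small to meet the congruence $\alpha\equiv -9\pmod{2^{m+2}}$ (one checks $2M(m+5)/P<2$) and too large to divide $M(m+5)$ (it exceeds $M(m+5)$). But for $n$ closer to $2m+6$ the divisors of $M(n)=9(2^{n}-1)(2^{n-3}-1)$ behave erratically as $m$ varies --- this is governed by the factorisations of the Mersenne-type numbers $2^{n}-1$ and $2^{n-3}-1$ --- so no single elementary inequality or congruence disposes of all the intermediate $n$ at once. To push the argument through one would most plausibly need an effective ingredient: either a Baker-type lower bound for $|P^{2}-(21x)^{2}|$ in the spirit of Bauer and Bennett \cite{Ben}, used to confine $n$ to a bounded neighbourhood of its extreme value $2m+8$ before invoking the divisor/congruence analysis, or a hypergeometric (Pad\'e-approximation) estimate \`a la Beukers \cite{Bo,Bo1}, exploiting that after normalisation the family is close to an equation $x^{2}=2^{N}+D$ of the kind Beukers solved --- the same type that underlies the proof of Theorem~\ref{A=-1:special}. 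Absent such an ingredient the statement remains conjectural, in agreement with the numerical verification that no extraneous $n$ occurs in the computed range.
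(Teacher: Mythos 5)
The statement you are addressing is one of the paper's \emph{conjectures}: the author states it on the basis of numerical experiments and offers no proof, so there is nothing in the paper to compare your argument against. Your reduction is correct as far as it goes. The identity $P^{2}-(21x)^{2}=9(2^{n}-1)(2^{n}-8)$ with $P=49\cdot 2^{2m+5}-11-3\cdot 2^{n}$ checks out, as do the four exhibited solutions at $n=0,3,2m+7,2m+8$, the eliminations for $n=1,2$ and for $4\le n\le m+4$, and the $2$-adic bookkeeping in the window $m+5\le n\le 2m+6$ that forces $\alpha\equiv -9\pmod{2^{n-3}}$.

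But, as you yourself concede in the final paragraph, that window is not disposed of, and it is exactly where the content of the conjecture lies. The constraints $\alpha\equiv-9\pmod{2^{n-3}}$, $\alpha\mid 9(2^{n}-1)(2^{n-3}-1)$ and $\alpha+2M(n)/\alpha=P$ are shown to be incompatible only at the single value $n=m+5$; for the roughly $m$ remaining values of $n$ nothing is proved, and the obstruction is genuine because the divisors of the Mersenne-type numbers $2^{n}-1$ and $2^{n-3}-1$ are not controlled uniformly in $m$. So what you have is an honest and correct partial reduction, not a proof; the statement remains a conjecture, exactly as the paper presents it. If you wish to pursue it, the completion you sketch is the plausible one: a Baker-type lower bound or the hypergeometric method of Beukers to confine $n$ to a bounded sub-window, after which your congruence and divisor analysis could in principle finish the finitely many remaining cases.
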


Based on our numerical experiments concerning the equation $x^2+Ak^{n}=B$ with positive $A$ and general $k$, we dare to state the following.
\begin{conj}
For any given $k\in\Z_{\geq 2}$ and positive integer $B$ the number of solutions in non-negative integers $(x,n)$ of the Diophantine equation $x^2+k^{n}=B$  is at most $3$.
\end{conj}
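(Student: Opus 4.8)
The plan would be as follows. Fix $k\in\Z_{\ge 2}$ and a positive integer $B$. Finiteness of the solution set is trivial, since every solution has $k^n\le B$, hence $n\le\log_k B$, and for each admissible $n$ there is at most one non-negative $x=\sqrt{B-k^n}$. Thus the entire content of the conjecture is the \emph{uniform} bound $3$, independent of $k$ and of $B$; in particular the per-$B$ techniques used elsewhere in this paper (reducing $n$ modulo $2$ or $3$ to Mordell or quartic curves and then applying \texttt{IntegralPoints}/\texttt{IntegralQuarticPoints}) cannot suffice, because the number of integral points on such curves is not bounded independently of the parameters. I would therefore first aim at the weaker but uniform statement that the number of solutions is $\le C$ for some absolute constant $C$, and only afterwards try to bring $C$ down to $3$.

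For the uniform bound I would argue by contradiction: suppose $(x_i,n_i)$, $i=0,1,2,3$, are four solutions with $n_0<n_1<n_2<n_3$; then automatically $x_0>x_1>x_2>x_3\ge 0$ and, subtracting the defining equations in pairs,
\begin{equation*}
x_i^2-x_j^2=k^{n_j}-k^{n_i}=k^{n_i}\bigl(k^{\,n_j-n_i}-1\bigr),\qquad 0\le i<j\le 3 .
\end{equation*}
Two interlocking lines of attack present themselves. First, for a prime $p\mid k$ and $n_i\ge 1$ one reads off $v_p\left(x_i^2-x_j^2\right)=n_i\,v_p(k)$ (since $k^{n_j-n_i}-1$ is a unit at $p$), and combining this with the existence of primitive prime divisors of the terms $k^m-1$ (Zsygmondy, Bang, and its sharp form) pins down the $p$-adic shape of the $x_i$ and forces strong divisibility constraints on $x_i\pm x_j$. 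Second, writing $n=2m$ or $n=2m+1$ turns $x^2+k^n=B$ into a representation $x^2+k^{\varepsilon}y^2=B$ with $y=k^m$ a power of $k$ and $\varepsilon\in\{0,1\}$; factoring in $\Z[\sqrt{-k}]$ (through its class group) expresses any two solutions lying in the same residue class via a single $S$-unit-type relation, to which effective lower bounds for linear forms in two logarithms (Laurent--Mignotte--Nesterenko) apply in order to bound the spread $n_3-n_0$, and hence the whole tuple $(n_0,n_1,n_2,n_3)$, explicitly. The prototype for all of this is Beukers' treatment of the $k=2$ case that is quoted in the proof of Theorem~\ref{A=-1:special}.

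With the finitely many surviving configurations of $(n_0,n_1,n_2,n_3)$ in hand, a congruence sieve modulo suitably chosen moduli, in the spirit of Stiller's analysis recalled in Remark~\ref{rem1}, should knock each of them out. The genuine obstacle, and the reason this is offered only as a conjecture, is the last step: the transcendence input delivers a correct but astronomically large value of $C$, and compressing it all the way down to $3$ appears to demand an exhaustive case analysis that is infeasible with current tools. Closing that gap cleanly would probably require a new idea — for instance, exploiting more systematically the rigid simultaneous sum-of-two-squares (and $x^2+ky^2$) structure that several solutions at once impose on $B$, rather than analysing the solutions one pair at a time.
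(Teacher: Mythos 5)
This statement is one of the paper's \emph{conjectures}: the author offers no proof at all, only the numerical evidence accumulated in Sections 2--4, so there is no ``paper proof'' to compare yours against. Your proposal is candid that it is a plan rather than a proof, and as a plan it correctly identifies the relevant machinery (Zsygmondy-type divisibility, factorization of $x^2+k^{\varepsilon}y^2=B$ in $\Z[\sqrt{-k}]$, linear forms in two logarithms, a final congruence sieve). But it does not close, and the gap is not merely the admitted difficulty of shrinking an absolute constant $C$ down to $3$; the more basic problem is that your intermediate target --- a uniform bound $C$ independent of $k$ and $B$ --- is not actually delivered by the tools you invoke. The Laurent--Mignotte--Nesterenko bound applied to the unit/ideal equation coming from $\Z[\sqrt{-k}]$ controls $n_3-n_0$ only in terms of the heights of the algebraic numbers involved, hence in terms of $k$ (and, through the ideal factorization of $(B)$ and the class group, in terms of $B$ as well: the number of inequivalent factorizations of $(B)$ grows with the number of prime divisors of $B$, so ``a single $S$-unit-type relation'' per residue class is not available uniformly). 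Consequently the ``finitely many surviving configurations of $(n_0,n_1,n_2,n_3)$'' form a finite set \emph{for each fixed $k$ and $B$}, and the concluding sieve in the spirit of Remark~\ref{rem1} would have to be rerun for infinitely many cases --- which is exactly the per-$B$ obstruction you yourself flag at the outset as insufficient.

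Two further technical points would need repair even in the fixed-$k$ setting: the ring $\Z[\sqrt{-k}]$ is the maximal order only when $k$ is squarefree and $-k\not\equiv 1\pmod 4$, so the class-group argument needs $k$ replaced by its squarefree kernel and the powers of $k$ redistributed, which interacts awkwardly with the requirement that $y=k^m$; and the valuation identity $v_p(x_i^2-x_j^2)=n_i\,v_p(k)$ only constrains $x_i\pm x_j$ usefully when $p$ is odd and $p\nmid x_j$, so the case $p=2$ and the case $\gcd(B,k)>1$ need separate treatment. None of this makes the approach wrong as a research programme --- it is essentially the Beukers/Bugeaud--Shorey route, and it is the natural one --- but it does not constitute a proof, and the statement should remain, as in the paper, a conjecture supported by computation.
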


\begin{conj}
For any given $k\in\Z_{\geq 2}$ and positive integers $A,\;B$ the number of solutions in non-negative integers $(x,n)$ of the Diophantine equation $x^2+Ak^{n}=B$  is at most $4$.
\end{conj}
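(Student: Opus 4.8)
The plan is to exploit the fact that, since $A,B>0$, rewriting the equation as $x^{2}=B-Ak^{n}$ places it in the framework of Section~4: one already knows $n\le\log_{k}(B/A)$, so the solution set is finite and effectively bounded, and the whole issue is to replace this $B$-dependent bound by the absolute constant $4$. The first step is to separate solutions according to the parity of $n$. If $n=2m$ then $x^{2}+A(k^{m})^{2}=B$, so $(x,k^{m})$ represents $B$ by the positive definite binary quadratic form $u^{2}+Av^{2}$; if $n=2m+1$ then $x^{2}+(Ak)(k^{m})^{2}=B$, so $(x,k^{m})$ represents $B$ by $u^{2}+(Ak)v^{2}$. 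In either case the second coordinate is a nonnegative power of $k$. Hence the conjecture reduces to the assertion that \emph{for a fixed positive definite form $u^{2}+Cv^{2}$ and a fixed $B$, at most two powers $k^{m}$ occur as the $v$-coordinate of a representation of $B$}. This exhibits $4$ as $2+2$, which is exactly the structure of the explicit four-solution families conjectured earlier in the paper --- one checks directly that in each of them precisely two of the four exponents are even and two are odd --- so the reduction is plausible and at the same time indicates that $4$ should be best possible.

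It then suffices to rule out three distinct powers $k^{m_{1}}<k^{m_{2}}<k^{m_{3}}$ all representing $B$ by $u^{2}+Cv^{2}$ (with $C\in\{A,Ak\}$). Fix a prime $p\mid k$. From $x^{2}=B-Ck^{2m}$ and the valuation identity $v_{p}(x-\beta)+v_{p}(x+\beta)=v_{p}(C)+2mv_{p}(k)$ for a $p$-adic square root $\beta$ of $B$, one sees that if $B$ has solutions for arbitrarily large $m$ then $B$ is a square in $\Z_{p}$ and, for large $m$, $x$ is congruent to one of the two square roots $\pm\beta$ of $B$ in $\Z_{p}$ to a precision growing linearly in $m$; since $0\le x\le\sqrt{B}$, this pins down the value of $x$, and hence of $m$, once the sign is chosen. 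So at most two of the exponents can be ``large'' in terms of $B$, and in the remaining cases at least one exponent is bounded, say $m_{1}\le m_{0}(B)$. There one would feed the identities $x_{i}^{2}-x_{j}^{2}=Ck^{2m_{j}}(k^{2(m_{i}-m_{j})}-1)$ into lower bounds for linear forms in archimedean and $p$-adic logarithms, combined with a gap principle, to force the admissible exponents to be too spread out to coexist. The elliptic-curve reduction used elsewhere in the paper --- writing $k^{n}=k^{r}(k^{m})^{3}$ and passing to the Mordell curves $Y^{2}=X^{3}+A^{2}k^{2r}B$ for $r=0,1,2$ --- is of no use here, since it delivers only finiteness and the ranks involved are unbounded.

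The principal obstacle is uniformity with the sharp constant. Applied directly, Baker-type estimates bound the number of solutions only by a quantity that still grows with $A$, $B$ and $k$, and nothing in the outline above by itself produces the clean bound $2$ per parity class; extracting it appears to require the finer, essentially $2$-adic analysis used by Beukers in \cite{Bo} (his Theorem~2, already invoked in the proof of Theorem~\ref{A=-1:special}) and by Bauer and Bennett in \cite{Ben}, perhaps together with a hypergeometric/Pad\'e argument to exclude sporadic clusters of representations. Since the families realising the value $4$ leave no slack, I expect that a fully unconditional proof, uniform in $k$, $A$ and $B$, is beyond present techniques, and that the realistic intermediate goal is the weaker statement that the number of solutions is bounded by an absolute constant, with the optimal value $4$ remaining conjectural.
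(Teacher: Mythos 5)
This statement is one of the paper's \emph{conjectures}: the author offers no proof, only numerical evidence, so there is no argument in the paper to compare yours against. Your proposal, to its credit, does not claim to be a proof either --- it is a strategy sketch ending in the admission that the uniform bound is probably out of reach, which is consistent with the status the paper assigns to the statement. Judged as a roadmap, however, it has a concrete defect beyond the honestly acknowledged reliance on unavailable uniform Baker-type estimates.

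The central reduction you propose --- that for a fixed form $u^{2}+Cv^{2}$ and fixed $B$ at most two powers $k^{m}$ can occur as the $v$-coordinate of a representation of $B$, so that the bound $4$ arises as $2+2$ over the two parity classes of $n$ --- is false, and the paper's own Table 13 refutes it. Take the row $k=21$, $B=5340742$, with solutions at $n=1,3,5$: since $2311^{2}+21\cdot 1^{2}=2309^{2}+21\cdot 21^{2}=1121^{2}+21\cdot 441^{2}=5340742$, the single form $u^{2}+21v^{2}$ represents $B$ with $v=21^{0},\,21^{1},\,21^{2}$, i.e.\ three powers of $k$ in one parity class. So even if every step of your program were carried out, it would prove a statement that is simply not true, and the $2+2$ accounting has to be abandoned (the correct target is ``at most $4$ in total,'' with no control promised per parity class). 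Your $p$-adic observation that at most two exponents can be ``large'' relative to $B$ is sound as a gap principle for $p\mid k$ with $p\nmid B$ odd, but it leaves all the difficulty in the bounded range, where, as you note, no present method yields a count independent of $A$, $B$ and $k$. The conclusion that the conjecture remains open is correct; the proposed route to it is not salvageable in the form stated.
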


\noindent
{\bf Acknowledgments}. I am grateful to professors A. Schinzel and A. D\c{a}browski for useful remarks concerning the content of this paper. I am also grateful to the anonymous referee for constructive suggestions which improve an earlier draft of this paper.

 \bigskip

\noindent  Maciej Ulas, Jagiellonian University, Faculty of Mathematics and Computer Science, Institute of Mathematics, {\L}ojasiewicza 6, 30 - 348 Krak\'{o}w, Poland\\
e-mail:\;{\tt maciej.ulas@uj.edu.pl}

 \end{document}